\title{Increasing the Size of Tame Shafarevich Groups}
\author{Andreea Iorga, Ravi Ramakrishna}
\address{Department of Mathematics, Cornell University, Ithaca, USA}
\email{ai324@cornell.edu rkr5@cornell.edu}
\thanks
{The authors thank Farshid Hajir and Christian Maire for helpful conversations,  and
Ariane M\'{e}zard for  detailed comments and corrections on  an early draft of the paper.
The second author was supported by a Simons Travel support gift. The authors used ChatGPT and Claude to assist in identifying typos, grammatical mistakes and notational inconsistencies.}
\begin{document}
\begin{abstract}
    Let $K$ be a number field with $S$ a finite set of primes. We study the cohomology of $\F_p[G_{K,S}]$-modules $A$, in particular the Shafarevich groups $\Sha^i_S(K,A)$ for $i=1,2$ and tame sets $S$, i.e., for sets $S$ that contain no primes above $p$. When $S$ contains all primes above $p$ (the ``wild'' setting), it is a consequence of global Poitou--Tate duality that 
    $$\Sha^1_S(K,A')^\vee \simeq \Sha^2_S(K,A) \stackrel{\simeq}{\hookrightarrow} \RusB_S(K,A) $$ is non-increasing  as $S$ increases. A similar result holds when $G_{K,S}$ is replaced by its maximal pro-$p$ quotient $G_{K,S}(p)$. 
    In \cite{MR4359923} it was shown that for $S$ tame and $A=\F_p$ with trivial action, the group $\Sha^2_S(K, \F_p)$ can {\it increase}  as $S$ increases to $S\cup X$, and even attain its maximal dimension, $\dim \RusB_S(K,\F_p)$, for carefully chosen $X$. In the first part of this paper, we use Liu's definition \cite{MR4896734} of $\RusB_S(K,A)$ for a general $\F_p[G_{K, S}]$-module $A$ to show, assuming $\Sha^1_{all}(K,A')=0$, that $\Sha^2_S(K,A) \hookrightarrow \RusB_S(K,A)$. This happens, for example, when the action of $G_{K,S}$ on $A$ is through a finite group of order prime to $p$. Under this extra assumption, we then strengthen the results of \cite{MR4359923} to show that for any odd prime $p$ and any $\F_p[G_{K, S}]$-module $A$ with $S$ tame, there exist infinitely many tame sets of primes $X$ of $K$ such that $\Sha^2_{S\cup X}(K,A) \stackrel{\simeq}{\hookrightarrow} \RusB_{S \cup X}(K,A) \stackrel{\simeq}{\twoheadleftarrow} \RusB_S(K,A) \hookleftarrow \Sha^2_S(K,A)$.
    
\end{abstract}

\maketitle

\section{Introduction}\label{intro}
Let $K$ be a number field with an algebraic closure $\overline{K}$ and set $G_K=\Gal(\overline{K}/K)$.
Let $S$ be a finite set of primes of $K$, and $p$ a fixed rational prime.
Denote by $K_S \subset \overline{K}$ the maximal extension of $K$ unramified outside $S$ and 
by $K_S(p) \subset K_S$ the maximal pro-$p$ extension of $K$ unramified outside $S$, with 
$G_{K, S}$ and $G_{K,S}(p)$  the corresponding Galois groups over $K$.  
Let $K_{\q}$ be the completion of $K$ at the prime ideal ${\q}$ and let $G_{K_{\q}}$ be an absolute Galois group of $K_{\q}$ with $G_{K_{\q}}(p)$ its maximal pro-$p$ quotient.
For a finite cardinality $G_{K,S}$-module $A$,
let 
\[
\Sha_{S}^i (K, A) = \ker \left(H^i(G_{K, S}, A) \to \prod_{{\q} \in S} H^i(G_{K_{\q}}, A)\right),
\]
as usual. As $A$ decomposes as a direct sum of $p$-primary modules and cohomology commutes with direct sums, it suffices to assume $A$ is a finite cardinality $\Z_p[G_{K,S}]$-module.

When $S$ contains all primes dividing $p$ (the ``wild'' setting), Poitou-Tate duality gives that 
$\Sha_{S}^2 (K, A) \simeq \Sha_S^1(K,A')^\vee$
where $A':=\Hom(A,\mu_{p^\infty})$ and $\mu_{p^\infty}$ is the group of all $p$-power roots of unity.
There is also a global Euler-Poincar\'{e} characteristic formula:
\[
\frac{\#H^0(G_{K,S},A) \cdot \# H^2(G_{K,S},A)}{\#H^1(G_{K,S},A)} =
\prod_{\mathfrak q | \infty} \frac{\# H^0(G_{K_{\mathfrak q}},A)}{|\#A|_{\mathfrak q}}.
\]
As  $H^0$s  are  ``easy'' to calculate,
this may be viewed as an $H^1$--$H^2$ duality: knowledge of the order of one of these cohomology groups gives the order of the other. 
A key ingredient in the proof of the wild Poitou-Tate duality is 
the exact Kummer sequence below, where
${\mathcal O}^\times_{K_S,S}$ is the group of $S$-units of  $K_S$:
\[
1 \rightarrow \mu_p \rightarrow {\mathcal O}^\times_{K_S,S} \stackrel{x\mapsto x^p}{\longrightarrow} {\mathcal O}^\times_{K_S,S} \to 1.
\]
If $S$ is tame, that is, if $S$ contains no primes above $p$, the $p$th power map need not be surjective, as extracting a $p$th root usually introduces ramification at $p$. This is a serious obstruction to an $H^1$-$H^2$ duality.

Our goal is to establish a partial duality result  in the tame case.  
Recall the following definitions:
\[
V_S(K,\F_p):= \{ x \in K^\times \, | \,  v_{\q}(x) \equiv 0 \bmod{p}, \,\,\forall {\q}; \mbox{ }x\in K^{\times p}_{\q}, \,\,\forall {\q} \,\in S\}
\]
and 
{the dual of $V_S(K, \F_p)/K^{\times p}$:}
\[\RusB_S(K, \F_p):=(V_S(K,\F_p)/K^{\times p})^\vee.
\] 
It is well known that for arbitrary $S$ we have the injection $\Sha^2_S(K, \F_p) \hookrightarrow \RusB_S(K, \F_p)$, and that this injection is an isomorphism if $S$ contains all primes above $p$. Furthermore, 
both $V_S(K,\F_p)$ and $\RusB_S(K, \F_p)$ are non-increasing as $S$ increases, and
$\dim \RusB_S(K, \F_p) < \infty$ for all sets $S$:
letting $\Cl_K[p]$ denote the $p$-torsion subgroup of the class group of $K$ and $U_K$ the units of $K$, we have the  exact sequence:

\begin{equation}
\label{eq:Vemptyset}
0 \to \frac{U_K}{U^p_K} \to 
\frac{V_\emptyset(K,\F_p)}{K^{\times p}} \to \Cl_K[p] \to 0.
\end{equation}
Thus understanding $\Sha^2_\emptyset(K, \F_p) \hookrightarrow \RusB_\emptyset(K,\F_p)$ involves, at a minimum, disentangling the units and class group of $K$. For references, see \S 11.2 and \S 11.3 of \cite{MR1930372} and Theorem 8.6.7 and Proposition 10.7.2 of \cite{MR2392026}.

Building on the work of Labute \cite{MR2254811}, Schmidt \cite{MR2629694} has proved that for an arbitrary number field $K$ and an odd prime $p$, there exists a finite set $Y$ of tame primes such that  $G_{K,Y}(p)$ has $p$-cohomological dimension $2$, and its
Galois cohomology
agrees with the \'{e}tale cohomology of the pro-$p$ cover of $\Spec({\mathcal O}_{K} \setminus Y)$. Schmidt's proof relies on, among other things,  choosing $Y$ so that $\RusB_Y(K, \F_p)$ and $\Sha^2_Y(K,\F_p)$ are trivial.  
 
For a general tame set $S$, it is very difficult to have any control over the group $\Sha_S^2(K, \F_p)$, and therefore to decide if $\Sha_S^2(K, \F_p) \hookrightarrow \RusB_S(K, \F_p)$ is a proper inclusion or an isomorphism. Setting
\[
\Sha_{S, p}^2(K, \F_p) := \ker\left(H^2(G_{K, S}(p), \F_p) \to \prod_{{\q} \in S} H^2(G_{K_{\q}}(p), \F_p)\right)
\]
and using Proposition 20 in \cite[II.\S 5.6]{MR1324577}, we have (replacing the local pro-$p$ Galois groups by the full groups):
\[
\Sha_{S, p}^2(K, \F_p) = \ker\left(H^2(G_{K, S}(p), \F_p) \to \prod_{{\q} \in S} H^2(G_{K_{\q}}, \F_p)\right).
\]
One checks  $\Sha_{S, p}^2(K, \F_p) \hookrightarrow \Sha_{S}^2(K, \F_p)$ (see Lemma~\ref{lemma:ShaInj})
and then one can construct examples in which the cokernel of
$\Sha_{S,p}^2(K, \F_p) \hookrightarrow \RusB_S(K, \F_p)$ is large. In \cite{MR4359923}, 
 it is shown that if $S$ is a (possibly empty) tame set of primes of $K$ and $p$ is odd, then there exist infinitely many finite tame sets $X$ such that
\begin{equation}
\label{eq:H1formula}
\Sha_{S \cup X, p}^2(K, \F_p) \stackrel{\simeq}{\hookrightarrow} \Sha^2_{S\cup X}(K, \F_p) \stackrel{\simeq}{\hookrightarrow} \RusB_{S \cup X}(K, \F_p) \stackrel{\simeq}{\twoheadleftarrow} \RusB_S(K, \F_p). \end{equation}
A slightly weaker result is obtained for $p=2$.
In contrast to Schmidt's result,  $X$ is chosen so 
$\Sha^2_{S\cup X}(K, \F_p)$ is {\it maximized} as opposed to trivialized. As in Schmidt's work, the primes of $X$ must be chosen to split appropriately in a certain governing field. 
For a definition of governing fields and a more detailed discussion, we refer the reader to \cite[Chapter V]{MR1941965}.
Recall that for a prime $p$ and number field $F$ we define $\delta_p(F):=\left\{ \begin{array}{cc} 1 & \mu_p \subset F\\ 0 & \mu_p \not \subset F \end{array} \right.$, and for a prime ${\q}$  we set $\chi({\q})$ to be the residue characteristic of ${\q}$. Let $r_1$ and $r_2$ be the number of real and pairs of complex embeddings of $F$.
From Theorem 11.8 of \cite{MR1930372} we have
\begin{equation}
\label{eq:ActualH1formula}
\dim H^1(G_{K,Z},\F_p) =
\sum_{\substack{{\q} \in Z \\ \chi({\q})=p}} [K_{\q}:{\q}_p] -\delta(K)-r_1-r_2+1+
\sum_{{\q} \in Z}\delta_p(K_{\q}) + \dim \RusB_Z(K,\F_p),
\end{equation}
so when~(\ref{eq:H1formula}) holds,
we have a partial tame $H^1$-$H^2$ duality.

In \cite{MR4896734}, Liu generalizes the definition
$\RusB_S(K, \F_p)$ to
finite cardinality $\Z_p[G_{K,S}]$-modules $A$. The dual of the 
map on the right below is, using local duality,  just the restriction map.
\[
\RusB_S(K, A) := \coker\left(\prod_{{\q} \in S} H^1(G_{K_{\q}}, A) \times \prod_{{\q} \notin S} H^1_{nr}(G_{K_{\q}}, A) \to H^1(G_K, A
')^\vee \right).
\]
Note that from the definition it is clear that $\RusB_S(K, A)$ is non-increasing as $S$ increases.
When $S$ is wild, it is a straightforward computation in local duality {(\cite[Lemma 8.7]{MR4896734})} to see that $\RusB_S(K, A)$ is dual to $\Sha_S^1(K,A')$. We use the subscript ``all'' to indicate $S$ consists of all places of $K$.

Our first result  generalizes the classical injection
$\Sha^2_S(K, \F_p) \hookrightarrow \RusB_S(K, \F_p)$ in certain situations. Note that our notation differs from that of \cite{MR4896734}. In the proof we give a dictionary to aid the reader in understanding the relevant part of that paper. 

\begin{theorem}\label{thm:Shaisinrusb} 
    Let $A$ be any finite cardinality $\Z_p[G_{K,S}]$-module and let $L:=K(A)$ be the splitting field of $A$.
    \begin{enumerate}[(i)]
        \item If $\Sha^1_{all}(K, A') = 0$, then there is a natural injection $\Sha^2_S(K, A) \hookrightarrow \RusB_S(K, A)$.
        \item Whenever we have an injection $\Sha^2_S(K, A) \hookrightarrow \RusB_S(K, A)$, it is  $\Gal(L/K)$-equivariant.
    \end{enumerate}
\end{theorem} 
In \cite{MR4896734}, an unconditional version of {\it (i)} is established $K$-theoretically, that is both sides are acted on by $\Gal(L/K)$ with $(\#\Gal(L/K),p)=1$. Liu shows that the multiplicity of any irreducible constituent on the left side is  less than or equal to its multiplicity on the right side. 
When $A=\F_p$ with trivial Galois action,
the result becomes 
$\dim \Sha^2_S(K, \F_p) \leq \dim \RusB_S(K, \F_p)$. 
We establish an actual injection under the above conditions. We also discuss when the assumption of {\it (i)} above is satisfied.  

For a finite cardinality $\Z_p[G_{K}]$-module $A$ with splitting field $L$, let $\Gamma=\Gal(L/K)$. If $S$ is a set of primes of $K$, let $\tilde{S}$ be the set of primes of $L$ above those in $S$. Note that if $A$ is a finite cardinality $\Z_p[G_{L, \tilde{S}}(p) \rtimes \Gamma]$-module with the property that $(\#\Gamma, p) = 1$, the fields $L$ and $K_S(p)$ are linearly disjoint over $K$. It follows that there is no action of $G_{K, S}(p)$ on $A$, so the object $\Sha^2_{S, p}(K, \F_p)$ (or $\Sha^2_{S, p}(K, A)$) does not exist. In this situation, we make the following definition: 
\begin{definition} Consider the semidirect product
    $G_{L, \tilde{S}}(p) \rtimes \Gamma$, given by the conjugation action of the prime-to-$p$ group $\Gamma$ on the pro-$p$ group $G_{L, \tilde{S}}(p)$. For a finite 
    {cardinality} 
    $\Z_p[G_{L, \tilde{S}}(p) \rtimes \Gamma]$-module $A$, define the following group: 
    $$\Sha^2(L_{\tilde{S}}(p)/K, A) := \ker\left(H^2(G_{L, \tilde{S}}(p) \rtimes \Gamma, A) \to \prod_{\q \in S}H^2(G_{K_\q}, A)\right).$$
    Note that the map above is obtained by restricting $H^2(G_{L, \tilde{S}}(p) \rtimes \Gamma, A)$ to the decomposition subgroups at $\q$ and then inflating to $H^2(G_{K_\q}, A)$. 
\end{definition}
When $\tilde{S}$ is wild,  the cohomology theory of such groups was developed in \cite{MR2470687} as part of the proof of the Sato-Tate conjecture, and later in \cite{MR4896734} to obtain results about the number of generators of profinite Galois groups.

The main results of this paper are the following generalizations of \cite[Theorem A]{MR4359923}: 
\begin{prop}\label{thm: main}
Let $p$ be an odd prime and let $A$ be a finite 
{cardinality} $\F_p[G_K]$-module. Recall $L=K(A)$ is the splitting field of $A$.
Let $S$ be a finite set of tame primes of $K$. Then there exist infinitely many finite  tame sets $X$ of places of $K$ such that 
    \begin{align*}
        (\Sha^2_{\tilde{S} \cup \tilde{X}, p}(L, A))^\Gamma \stackrel{\simeq}{\hookrightarrow}
        (\Sha^2_{\tilde{S} \cup \tilde{X}}(L,A))^\Gamma\stackrel{\simeq}{\hookrightarrow} 
        \RusB_{\tilde{S} \cup \tilde{X}}(L, A)^\Gamma \stackrel{\simeq}{\twoheadleftarrow} 
        \RusB_{\tilde{S}}(L, A)^\Gamma.  
    \end{align*}
\end{prop}
\begin{theorem}\label{thm:main2}
    With the notations as above, suppose $S$ contains all ramified primes of $L/K$ and $(\#\Gamma,p)=1$. Then there exist infinitely many finite  tame sets $X$ of places of $K$ such that 
    {\begin{align*}
        \Sha^2(L_{\tilde{S}\cup \tilde{X}}(p)/K, A) \stackrel{\simeq}{\hookrightarrow}
        \Sha^2_{S \cup X}(K, A) \stackrel{\simeq}{\hookrightarrow} \RusB_{S \cup X}(K, A) \stackrel{\simeq}{\twoheadleftarrow} \RusB_{S}(K, A) \hookleftarrow \Sha^2(L_{\tilde{S}}(p)/K, A). 
    \end{align*}}
\end{theorem}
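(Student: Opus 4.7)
The plan is to deduce Theorem~\ref{thm:main2} from Proposition~\ref{thm: main} by taking $\Gamma$-invariants throughout and exploiting that $(\#\Gamma,p)=1$ collapses the relevant Hochschild--Serre spectral sequences. First I would select $X$ from the infinite family produced by Proposition~\ref{thm: main} while imposing the additional Chebotarev condition that each prime of $X$ be unramified in $L/K$; since only finitely many primes of $K$ ramify in $L/K$ (all lying in $S$), this constraint is compatible with the governing-field conditions already used to choose $X$. Since $S$ contains all primes of $K$ ramified in $L/K$, one has $L\subseteq K_{S\cup X}$, and because $(\#\Gamma,p)=1$ the fields $L$ and $K_{S\cup X}(p)$ are linearly disjoint over $K$, so $\Gal(L_{\widetilde{S\cup X}}(p)/K)\simeq G_{L,\widetilde{S\cup X}}(p)\rtimes\Gamma$, matching the hypothesis in the definition of $\Sha^2(L_{\tilde T}(p)/K,A)$.

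Next I would build a dictionary between $K$-level and $L$-level objects valid for any tame $T\supseteq S$. Since $A$ is $p$-primary and $\Gamma$ has order prime to $p$, $H^j(\Gamma,-)$ vanishes in positive degree, so Hochschild--Serre applied to $1\to G_{L,\tilde T}\to G_{K,T}\to\Gamma\to 1$ and to $1\to G_{L,\tilde T}(p)\to G_{L,\tilde T}(p)\rtimes\Gamma\to\Gamma\to 1$ yields
\[
H^i(G_{K,T},A)=H^i(G_{L,\tilde T},A)^\Gamma,\qquad H^i(G_{L,\tilde T}(p)\rtimes\Gamma,A)=H^i(G_{L,\tilde T}(p),A)^\Gamma.
\]
Locally, for $\p\in T$ the primes $\mathfrak q\mid\p$ of $L$ form a single $\Gamma$-orbit, and Shapiro's lemma identifies $H^i(G_{K_\p},A)$ with $\bigl(\prod_{\mathfrak q\mid\p}H^i(G_{L_\mathfrak q},A)\bigr)^\Gamma$. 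Combined with the previous identifications this gives $\Sha^2_T(K,A)=\Sha^2_{\tilde T}(L,A)^\Gamma$ and $\Sha^2(L_{\tilde T}(p)/K,A)=\Sha^2_{\tilde T,p}(L,A)^\Gamma$. Since $(-)^\Gamma$ is exact on $\F_p[\Gamma]$-modules under our hypothesis (so it commutes with cokernels), and invariants agree with coinvariants (so it commutes with Pontryagin duality), applying it to Liu's cokernel defining $\RusB_{\tilde T}(L,A)$ and using the dictionary on each factor yields $\RusB_T(K,A)=\RusB_{\tilde T}(L,A)^\Gamma$.

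With the dictionary in place I apply $(-)^\Gamma$ to the chain supplied by Proposition~\ref{thm: main} for $T=S$ and $T=S\cup X$. Each arrow in that chain is $\Gamma$-equivariant: the pro-$p$ inclusion of Lemma~\ref{lemma:ShaInj}, the injection $\Sha^2\hookrightarrow\RusB$ furnished by Theorem~\ref{thm:Shaisinrusb}(ii), and the restriction-induced surjection $\RusB_{\tilde S}(L,A)\twoheadrightarrow\RusB_{\widetilde{S\cup X}}(L,A)$ visibly so. Substituting the identifications of the previous paragraph converts the four arrows of Proposition~\ref{thm: main} into the first four arrows of the desired $K$-level chain. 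For the trailing injection $\Sha^2(L_{\tilde S}(p)/K,A)\hookrightarrow\RusB_S(K,A)$, compose the $\Gamma$-equivariant inclusions $\Sha^2_{\tilde S,p}(L,A)\hookrightarrow\Sha^2_{\tilde S}(L,A)\hookrightarrow\RusB_{\tilde S}(L,A)$ from Lemma~\ref{lemma:ShaInj} and Theorem~\ref{thm:Shaisinrusb}(ii), and take $\Gamma$-invariants.

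The main obstacle is the naturality bookkeeping in the dictionary: one must verify carefully that the Hochschild--Serre and Shapiro identifications interchange $L$-level restriction maps (into decomposition groups at $\tilde T$) with $K$-level restriction maps (into decomposition groups at $T$), and that Liu's dualized cokernel really does commute with $(-)^\Gamma$ on both sides (this is where invariants-equals-coinvariants is used). Once those naturality checks are in hand, the four isomorphisms/injections of Proposition~\ref{thm: main} transfer literally to $K$, and the theorem follows.
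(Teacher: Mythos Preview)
Your proposal is correct and follows essentially the same route as the paper: the paper packages your ``dictionary'' (the Hochschild--Serre/Shapiro identifications $\Sha^2_{\tilde T,p}(L,A)^\Gamma\simeq\Sha^2(L_{\tilde T}(p)/K,A)$, $\Sha^2_{\tilde T}(L,A)^\Gamma\simeq\Sha^2_T(K,A)$, $\RusB_{\tilde T}(L,A)^\Gamma\simeq\RusB_T(K,A)$) into Lemma~\ref{lemma:equiv}, and then the proof of Theorem~\ref{thm:main2} is literally ``take $\Gamma$-invariants of the $\Gamma$-equivariant diagram~(\ref{eqn:equivariant}) and apply Lemma~\ref{lemma:equiv}.'' Your extra care about $X$ avoiding ramified primes is unnecessary since those primes are already assumed to lie in $S$, but it does no harm.
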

While Schmidt's work \cite{MR2629694} gives a very strong tame duality theorem by choosing $S$ such that $\RusB_S(K,\F_p)=0$, our results represent progress towards establishing a tame duality when $\RusB_S(K,A)\neq 0$. The obstruction to formulating an $H^1$-$H^2$ duality in this case lies in the  difficulty of determining the image of the localization map. 
Theorem~\ref{thm:main2} most naturally generalizes the main result of \cite{MR4359923}, 
but if the set of ramified primes of $L/K$ contains primes that are {\it not} in $S$, Proposition~\ref{thm: main} is the natural result.
In Section~\ref{MainResults}, we also prove that any isomorphism $\Sha^2(L_{\tilde{S}}(p)/K, A) \stackrel{\simeq}{\hookrightarrow} \Sha^2_S(K, A) \stackrel{\simeq}{\hookrightarrow} \RusB_S(K, A)$ is preserved if we increase the set $S$ (see Corollary~\ref{cor:isoarepreserved}).
\vskip1em
The method of proof of Proposition~\ref{thm: main} is to pass to the field $L$ where $A$ is a trivial Galois module. Using the main result of \cite{MR4359923} we obtain
\[
\begin{tikzcd}
    \Sha^2_{\tilde{S}, p}(L, \F_p^{\dim A}) \arrow[r, hook] & \Sha^2_{\tilde{S}}(L, \F_p^{\dim A}) \arrow[r, hook] & \RusB_{\tilde{S}}(L, \F_p^{\dim A}) \arrow[d, two heads, "\simeq"] \\ \Sha^2_{\tilde{S} \cup X', p}(L, \F_p^{\dim A}) \arrow[r, hook, "\simeq"] & \Sha^2_{\tilde{S} \cup X'}(L, \F_p^{\dim A}) \arrow[r, hook, "\simeq"] & \RusB_{\tilde{S} \cup X'}(L, \F_p^{\dim A})
\end{tikzcd}
\]
for infinitely many finite sets $X'$ of primes of $L$.
{The key ingredient of this proof is Lemma \ref{RusBpreserved}, which gives that the above isomorphisms persist when we replace
$X'$ by its $\Gal(L/K)$-orbit $\tilde{X}$.}
We then take $\Gamma$-invariants to obtain Proposition~\ref{thm: main}.
The descent to  $K$  in Theorem~\ref{thm:main2} uses that $(\#\Gamma,p)=1.$
The reason for working only with Galois modules that are vector spaces over $\F_p$ is because the results of  \cite{MR4359923} only hold for $A = \F_p$. To obtain our result for finite cardinality $\Z_p[G_K]$-modules would require extending those results to the trivial Galois modules $\Z/p^k\Z$.

In \S \ref{ShaRusB} we prove Theorem~\ref{thm:Shaisinrusb} and discuss when $\Sha^1_{all}(K,A')$ is trivial. 
In \S \ref{SomeLemmas} we prove some  lemmas needed in \S \ref{MainResults}.
In \S \ref{MainResults} we prove Proposition~\ref{thm: main} and Theorem~\ref{thm:main2}.
In \S \ref{p=2} we address the case $p=2$. The results of \cite{MR4359923} are slightly weaker in this case, leading to our slightly weaker results.
In \S \ref{examples} we give explicit examples.
\vskip1em
\noindent \textbf{Definitions and Notations}

\begin{itemize}[leftmargin=*]
  \item $p$:  A prime number.
  \item $K$: A number field.  
  \item $G_K$: The absolute Galois group of $K$.
  \item $\mathcal{O}_K$:  The ring of integers of $K$. 
  \item A prime ideal $\q$ of $\mathcal{O}_K$ is called tame if $\#\mathcal{O}_K/\q \equiv 1 \pmod{p}$. This is a necessary condition for a $p$-extension of $K$ to be ramified at $\q$.
  \item $K_S$, $K_S(p)$: If $S$ is a set of primes, $K_S$ (resp. $K_S(p)$) is the maximal extension of $K$ unramified outside $S$ (resp. the maximal pro-$p$ extension of $K$ unramified outside $S$).
  \item  $G_{K, S} := \Gal(K_S/K)$,  $G_{K, S}(p) := \Gal(K_S(p)/K)$. 
  \item $A$:  A finite {cardinality} $\Z_p[G_{K, S}]$-module with dual 
  $A' := \Hom(A, \mu_{p^\infty})$. 
  \item $L := K(A)$:  The splitting field of $A$, i.e., the subextension of $K_S/K$ such that $\Gal(K_S/L)$ is the maximal subgroup of $G_{K, S}$ which acts trivially on $A$. 
  \item $\Gamma:= \Gal(L/ K)$.
  \item $\tilde{T}$: For $T$ a set of primes of $K$, this is the set of primes of $L$ above $T$. 
  \item $K_\q$: For a prime ideal $\q$ of $\mathcal{O}_K$, this is the completion of $K$ at $\q$. 
  \item $G_{K_\q}$, $G_{K_\q}(p)$: The absolute Galois group of $K_\q$ and its  maximal pro-$p$ quotient.
  \item $\mathcal{I}_{\q}(K)$: The inertia subgroup of $G_{K_\q}$.
  \item $H^1_{nr}(G_{K_\q}, A) := \ker(H^1(G_{K_\q}, A) \to H^1(\mathcal{I}_\q(K), A))$. This is the unramified cohomology.
  \item  $\Sha_S^i(K, A):= \ker\left(H^i(G_{K, S}, A) \to \prod_{\q \in S} H^i(G_{K_\q}, A)\right)$.
  
  \item $\begin{aligned}[t]
      \Sha_{\tilde{S}, p}^2(L, A) :& = \ker\left(H^2(G_{L, \tilde{S}}(p), A) \to \prod_{{\tilde{\q}} \in \tilde{S}} H^2(G_{L_{\tilde{\q}}}(p), A)\right) \\ &\simeq \ker\left(H^2(G_{L, \tilde{S}}(p), A) \to \prod_{{\tilde{\q}} \in \tilde{S}} H^2(G_{L_{\tilde{\q}}}, A)\right).
  \end{aligned}$
  
  \noindent The isomorphism follows from Proposition 20 in \cite[II.\S 5.6]{MR1324577}.
  \item  $\Sha^2(L_{\tilde{S}}(p)/K, A) := \ker\left(H^2(G_{L,\tilde{S}}(p)\rtimes \Gamma, A) \to \prod_{\q \in S}H^2( G_{K_\q}, A)\right)$, for a finite cardinality $\Z_p[G_{K, S}]$-module $A$ for which $(\#\Gamma, p) = 1$.
  \item $\Sha^i_{all}(K, A) := \ker \left(H^i(G_{K}, A) \to \prod_{ \text{ all }\q} H^i(G_{K_\q}, A)\right).$
\item $\RusB_S(K, A) := \coker\left(\prod_{\q \in S} H^1(G_{K_\q}, A) \times \prod_{\q \notin S} H^1_{nr}(G_{K_\q}, A) \to H^1(G_K, A')^\vee \right)$, where the map is the dual of the restriction map, using local duality.
\item $\RusB_{all}(K, A) := \coker\left( \prod_{\text{ all }\q} H^1(G_{K_\q}, A) \to H^1(G_K, A')^\vee \right)$. {This group} is dual to $\Sha^1_{all}(K, A')$ {\cite[Lemma 8.7]{MR4896734}}. 
\item $V_S(K,\F_p):= \{ x \in K^\times \, |  \, v_\q(x) \equiv 0 \bmod{p}, \,\,\forall {\q}; \mbox{ }x\in K^{\times p}_\q, \,\,\forall \q \,\in S\}.$
{By \cite[Proposition 8.3]{MR4896734},} $(V_S(K, \F_p)/K^{\times p})^\vee \simeq \RusB_S(K, \F_p).$
\end{itemize}

\section{Proof of Theorem \ref{thm:Shaisinrusb}}\label{ShaRusB}

In this section, we prove Theorem~\ref{thm:Shaisinrusb}.  Let $p$ be any prime (including 2), and let $S$ be a finite set of primes of $K$.
\begin{proof}[Proof of Theorem~\ref{thm:Shaisinrusb}]
    Below are some remarks  translating between our notation and that of Liu
in \cite{MR4896734}. 
\begin{itemize} 
\item Our $K$ plays the role  of $Q$ in \cite{MR4896734}. The Galois action on $A$ always factors through $\Gal(L/K)$, whereas in
\cite{MR4896734} the action factors through, in that notation, $\Gal(k_S/Q)$.
\item $N:=\ker\left(\RusB_S(K, A) \twoheadrightarrow \RusB_{all}(K, A)\right)$.
\item $\beta : H^2(G_{K, S}, A) \to H^2(G_K, A)$ is the inflation map. 
\end{itemize}
 Towards the end of the proof of Proposition 8.5 of \cite{MR4896734}, there are two exact sequences 
\[
\begin{tikzcd}
    0 \arrow[r] & \ker(\beta) \arrow[r] & \Sha_S^2(K, A) \arrow[r, "\beta"] & \beta(\Sha^2_S(K, A)) \arrow[r] &0 \\
    0 \arrow[r] & N \arrow[r] & \RusB_S(K, A) \arrow[r] & \RusB_{all }(K, A) \arrow[r] &0
\end{tikzcd}
\]

An easy diagram chase in the proof of \cite[Proposition 8.5]{MR4896734} shows that $ \ker(\beta) \hookrightarrow N$ and $\beta(\Sha^2_S(K,A)) \hookrightarrow  \RusB_{all}(K,A)$. By \cite[Proposition 8.3]{MR4896734}, $\Sha^1_{all}(K, A') \cong \RusB_{all}(K, A)^\vee$, so our hypothesis implies that $\RusB_{all}(K,A)=0$, so 
$\beta(\Sha^2_S(K,A))=0$, and the diagram above becomes
\[
\begin{tikzcd}
    0 \arrow[r] & \ker(\beta) \arrow[r] \arrow[d, hook] & \Sha^2_S(K, A) \arrow[r] & 0 \arrow[r] & 0 \\
    0 \arrow[r] & N \arrow[r] &\RusB_S(K, A) \arrow[r] & 0 \arrow[r] & 0
\end{tikzcd}
\]
which gives $\Sha^2_S(K,A) \hookrightarrow \RusB_S(K,A)$, proving the first part.

For {\it (ii)}, Liu proves the injection 
$\ker(\beta) \hookrightarrow N$ is $\Gal(L/K)$-equivariant.
\end{proof}

We now investigate when the hypothesis of Theorem~\ref{thm:Shaisinrusb} is satisfied. We start by noting that there are several situations in which a finite cardinality $\Z_p[G_K]$-module $A$ gives $\Sha^1_{all}(K, A) = 0$ covered in \cite{MR2392026}. We include them here for completeness; for a more detailed discussion, see Theorem 9.1.9 and 9.1.15 in \cite{MR2392026}. 
Of course there are famous counterexamples due to Wang \cite{MR26992}. 
\begin{example} \label{Sha=0} (\cite[Theorems 9.1.9, 9.1.15]{MR2392026})
    Let $A$ be any finite cardinality $\Z_p[G_K]$-module. Then $\Sha^1_{all}(K, A) = 0$ in the following cases:
    \begin{enumerate}[(i)]
        \item $A$ is a trivial $G_K$-module.
        \item $p$ is odd and $A = \mu_{p^n}$.
        \item $p = 2$ and $A = \mu_{2^n}$, except in the special case where $n \geq 3$, 
        $K(\mu_{2^n})/K$ is not cyclic and all primes dividing $2$ decompose in $K(\mu_{2^n})/K$.
        Then $\Sha^1_{all}(K, A) \cong \Z/2\Z$. 
        \item $\#\Gal(L/K) = lcm\{\#\Gal(L_\q/K_\q), \q \text{ is a prime of } K\}$.
        \item $pA = 0$, $A$ is a simple $G_K$-module and $\Gal(K(A)/K)$ is solvable. 
        \item $pA = 0$, $A$ is a simple $G_K$-module and there exist finite extensions $K' \subset K''$ of $K$ such that $[K'':K']$ is prime to $p$, $\mu_p \not\subset K'$, $K(A') \subset K'$ and $\mu_p \subset K''$. 
    \end{enumerate}
\end{example}

Note that this list is not exhaustive. In what follows, we will present some examples that are not covered by Example~\ref{Sha=0}, but still have $\Sha^1_{all}(K, A) = 0$, and some examples with $\Sha^1_{all}(K, A) \neq 0$ that do not come from Wang's counterexample.

A first class of examples comes from the following:
\begin{lemma}\label{shaonezero}
    Let $M \subset G_K$ be the maximal subgroup which acts trivially on $A$, so $M$ fixes $L=K(A)$ and $A^M = A$. If the Sylow-$p$ subgroup of $G_K/M$ is cyclic, then  $\Sha^1_{all}(K, A) = 0$. 
\end{lemma}
\begin{proof}
    Let $0 \neq f \in H^1(G_K, A)$. We will prove that $f \notin \Sha^1_{all}(K, A)$. 

    On the one hand, if $f$ does not inflate from $H^1(G_K/M, A)$, then the inflation-restriction sequence $$0 \to H^1(G_K/M, A) \to H^1(G_K, A) \to H^1(M, A)^{G_K/M}$$ implies that $f$ restricts to a non-zero element of $H^1(M, A)^{G_K/M} = \Hom_{G_K/M}(M, A)$. Thus, $f$ cuts out the non-trivial field extension $L_f/L$ in the diagram below. Note $L_{f}/K$ is a finite Galois extension. Let ${\mathfrak q}$ be any prime of $K$ whose Frobenius in $\Gal(L_{f}/K)$ is a non-trivial element of $\Gal(L_{f}/L)$. Then  $f|_{G_{K_{\mathfrak q}}} \neq 0$ so $f \notin \Sha^1_{all}(K,A)$. This part of the proof does not use the assumption of the cyclicity of the Sylow-$p$ subgroup.
    \[
        \begin{tikzcd} [arrows=dash]
        \overline{K} \arrow[dd] \arrow[dr] \arrow[dd, bend right = 30, "M" swap] \arrow[dddd, bend right = 60, "G_K", swap] & \\ & L_{f} \arrow[dl] \\ L=K(A)\arrow[d] \arrow[dd, bend left = 45, "G_K/M"] \arrow[d, bend right = 30, "Syl_p" swap] & \\ E \arrow[d] & \\K &
        \end{tikzcd}
    \]
    
    On the other hand, if $f$ inflates from $H^1(G_K/M, A)$, then by abuse of notation, let $0 \neq f \in H^1(G_K/M, A)$. Recall we are assuming $A$ is a finite cardinality $\Z_p[G_K]$-module and let $Syl_p$ be a Sylow-$p$ subgroup of $G_K/M$ with fixed field $E$. 
    As $\#A=p^m$ for some $m>0$, the restriction map $H^1(G_K/M,A) \to H^1(Syl_p,A)$ is injective. As $Syl_p = \langle \sigma \rangle$ is cyclic, we can choose a prime $\q_0$ of $E$ whose Frobenius is conjugate to $\sigma$. Then $Syl_p$ is a quotient of the local Galois group $G_{E_{\q_0}}$, so $f|_{G_{E_{\q_0}}}$ inflates from the non-zero element $f|_{Syl_p}$ and is thus non-zero itself. Choosing $\q$ of $K$ below $\q_0$, we obtain
    $f \notin \Sha^1_{all}(K, A)$. 

    Combining these two cases, it follows that $\Sha^1_{all}(K, A) = 0$. 
\end{proof}

\begin{corollary}\label{primetop-sha=0}
    If $\Gal(L/K)$ has order prime to $p$, then $\Sha^1_{all}(K, A) = 0$. Moreover, if one also has $pA = 0$, then $\Sha^1_{all}(K, A') = 0$. 
\end{corollary}
\begin{proof}
    If $\Gamma = \Gal(L/K)$ has order prime to $p$, we are in the case of the proof of Lemma~\ref{shaonezero} where the restriction map 
    $H^1(G_K,A) \to H^1(M,A)^{G_K/M}$ is injective, so
    we get $\Sha^1_{all}(K, A) = 0$. 

    Let $L' = K(A')$. Since $pA=0$, we see that $A' = \Hom(A, \mu_p)$, so we have $L' \subseteq L(\mu_p)$. Thus, $\Gal(L'/K)$ has order prime to $p$. By the first part, we get $\Sha^1_{all}(K, A') = 0$.
\end{proof}

Another class of examples comes from the following:
\begin{lemma} \label{shaA'=0}
    Let $A$ be an $\F_p[G_K]$-module. If $\mu_p \not\subset K(A)$, then $\Sha^1_{all}(K, A') = 0$. 
\end{lemma}
\begin{proof}
    Observe that $A' = \Hom(A, \mu_p)$, since $A$ is an $\F_p[G_K]$-module. Then $K(A') = K(A, \mu_p)$. Let $G = \Gal(K(A')/K)$ and $N = \Gal(K(A')/K(A))$, and note that $N$ has order prime to $p$. Then in the inflation-restriction sequence $$0 \to H^1(G/N, (A')^N) \to H^1(G, A') \to H^1(N, A),$$ $H^1(N, A) = 0$. Moreover, $N$ fixes $A$ and $N$ acts on $\mu_p$ via the non-trivial cyclotomic character, so $(A')^N = 0$, which implies that $H^1(G/N, (A')^N) = 0$. Thus $H^1(G, A') = 0$ and following the proof of the first part of Lemma~\ref{shaonezero}, 
    $\Sha^1_{all}(K, A') = 0$, as desired.  
\end{proof}

We finish this section by observing that classifying the $\Z_p[G_K]$-modules $A$ with $\Sha^1_{all}(K, A) = 0$ is not a trivial task. In particular, we give examples of fields $K$ and modules $A$ that don't fall into any of the categories enumerated above, but still have $\Sha^1_{all}(K, A) = 0$. 
\begin{example}
    Let $p = 3$, $K = \Q$ and let $L$ be the compositum of the splitting fields of $x^3 - x^2 - 26x-41$ and $x^3 - x^2 - 32x + 79$. Then $L/\Q$ is Galois with Galois group isomorphic to $\Z/3\Z \times \Z/3\Z$. Let $\Gamma = \Gal(L/\Q) = \langle \sigma, \tau \rangle$. Let $A$ be the $\F_p[G_\Q]$-module defined by 
    \begin{align*}
        \rho: G_\Q \twoheadrightarrow \Gal(L/\Q) &\to \GL_3(\F_p) \\
        \sigma &\mapsto I + E_{1,2} \\
        \tau &\mapsto I + E_{1, 3}
    \end{align*}
    Note that the field cut out by $A$ is exactly $L$, so $\Q(A) = L$. Moreover, $A$ is not a trivial $G_\Q$-module, $A \neq \mu_{p^n}$, $A$ is not a simple $G_\Q$-module, $\#\Gal(L/\Q) = 9 \neq 3 = lcm\{\#\Gal(L_\q/\Q_\q)\}$, and the Sylow-$3$ subgroup of $\Gal(L/\Q)$ is not cyclic, so this example has a different flavor than the ones in Example~\ref{Sha=0}, Lemma~\ref{shaonezero} and Lemma~\ref{shaA'=0}. 

    We now show that $\Sha^1_{all}(\Q, A) = 0$. To this end, assume there exists $0 \neq [f] \in \Sha^1_{all}(\Q, A)$. This means that $0 \neq [f] \in H^1(\Gamma, A)$ such that the restriction $[f]|_{\Gamma_\q} = 0$, for all primes $\q$. Here $\Gamma_\q \subset \Gamma$ denotes the decomposition subgroup at the prime $\q$. By Chebotarev's theorem, every non-trivial cyclic subgroup of $\Gamma$ occurs as a decomposition subgroup at some prime, so we need $[f]|_{\langle g \rangle} = 0$, for all cyclic subgroups $\langle g \rangle \subset \Gamma$. Let $g = \sigma^i \tau^j \neq 1$. As $f$ is a $1$-cocycle in $H^1(\Gamma, A)$, we have 
    \[f(g) = if(\sigma) + jf(\tau) + \text{ terms in } \F_pe_1.\]
    The assumption $[f]|_{\langle g \rangle} = 0$ implies that $if(\sigma) + jf(\tau) \in \F_pe_1$, for all $(i, j) \neq (0, 0)$. In particular, this implies that $f$ must satisfy $f(\sigma) = (a, 0, 0)$, $f(\tau) = (b, 0, 0)$, for some $a, b \in \F_p$. However, these are exactly the 1-coboundaries in $H^1(\Gamma, A)$, so $[f] = 0$ in $H^1(\Gamma, A)$. It follows that our assumption was false and $\Sha^1_{all}(\Q, A) = 0$. 
\end{example}
\begin{example} Let $K$ be any number field.
    Let $G$ be any non-cyclic $p$-group with $Z$ any central subgroup of order $p$. We now exhibit an $\F_p[G]$-module $I$ with $H^1(G,I) \cong \F_p$ (spanned, say, by $f$) such that the restriction map $H^1(G,I) \to H^1(Z,I)$ is injective. Then using the Scholz-Reichardt theorem we can realize $G$ as $\Gal(L/K)$ and letting $\q$ be a prime of $K$ whose Frobenius spans $Z$, we will have $f\mid_{G_\q} \neq 0$ so $\Sha^1_{all}(K, I) = 0$. Note this example does not fall in any of the cases of Example~\ref{Sha=0}, Lemma~\ref{shaonezero} and Lemma~\ref{shaA'=0}. 

Let $I$ be the augmentation ideal of $\F_p[G]$ and consider the exact sequence
$$0 \to I \to \F_p[G] \to \F_p \to 0.$$ Take $G$-cohomology to get
$$0 \to I^G \to \F_p[G]^G \to \F_p \to H^1(G,I) \to 0.$$
One sees $\F_p[G]^G =\lambda\cdot(\sum_{g\in G}g)$ and subtracting $0=\lambda |G|\cdot e$, this element also spans $I^G$, so $\dim H^1(G,I)=1$. 
One sees $\F_p[G]^Z$ is spanned by the cosets of $Z$ in $G$, namely elements of the form 
$\sum_{g\in G/Z} \lambda_g \left( \sum_{z\in Z} gz\right)$. Note that if we subtract $0=p\cdot e$ from each inner sum,  we get an element of $I$ so $I^Z=\F_p[G]^Z$. This common $\F_p[G/Z]$-module is just the regular representation of $G/Z$ so $H^1(G/Z,I^Z)=0$ and the inflation-restriction sequence 
$$0\to H^1(G/Z,I^Z) \to H^1(G,I) \to H^1(Z,I)$$ 
becomes
$$0\to 0 \to H^1(G,I) \to H^1(Z,I),$$ 
giving our desired injection.
\end{example}
\begin{example} \label{Shanot0} Let $p$ be any odd prime and $K$ be any number field. We give an example of a $\F_p[G_K]$-module $M$ for which $\Sha^1_{all}(K, M) \neq 0$. 
To the best of our knowledge, this is the first
example of $\Sha^1_{all}(K,A)\neq 0$ when $p$ is odd, and were surprised not to find any. Therefore, to our knowledge, this is the first example with $p$ odd for which $\Sha^1_{all}(K, A) \neq 0$.

    Let $B \in \GL_p(\F_p)$ be a permutation matrix corresponding to a $p$-cycle. Using the binomial theorem, $(I+B)^p=I+B^p=I+I$. Thus  
    $C:=(I+B)^{p-1}$ is trivial or has order $p$. Using that $1\leq k\leq p-1$, we see that exactly one of the matrices in the binomial expansion of $C$ has non-zero $(1,2)$-entry. Thus $C\neq I$ and its order is $p$. Clearly $B$ and $C$ commute and we show $G:=\langle B,C\rangle \simeq (\F_p)^2$: suppose $C=B^r$ for some $1\leq r\leq p-1$ and observe the $(1,1)$-entry of $C$ is $1$ while that of $B^r$ is $0$.
    
    Let $M=(\F_p)^p$ with the natural action of $G$. We show that for any proper subgroup $Z \leq G$ that the restriction map $H^1(G,M) \to H^1(Z,M)$ is the zero map. As one can realize $G$ as $\Gal(L/K)$ where the decomposition groups of all ramified primes are cyclic,  $\Sha^1_{all}(K, M) \neq 0$.
   
    Since $B$ is a $p$-cycle permutation matrix, we see 
    that the action of $\langle B \rangle $ on $M$ is via the regular representation of $\langle B \rangle $. Also, $M^B$ is just the scalar column vectors in $M$, so
    the inflation-restriction sequence
    $$0 \to H^1(G/\langle B\rangle,M^{\langle B \rangle }) \to H^1(G,M) \to 
    H^1(\langle B\rangle,M)$$
    has trivial last term and $H^1(G/\langle B\rangle,M^{\langle B \rangle }) \simeq \Hom(\F_p,\F_p)$ so $\dim H^1(G,M)=1$.

    Now let $N$ be a proper (necessarily) normal subgroup of $G$ and note $0 \subsetneq M^N \subsetneq M$ so 
    $0<\dim M^N<p$. Thus $M^N$ is not the regular representation of $G/N\simeq \F_p$ and $H^1(G/N,M^N) \neq 0$ (see \cite[Theorem 6]{MR219512}). Thus in
    the inflation-restriction sequence
    $$0 \to H^1(G/N,M^N) \to H^1(G,M) \to 
    H^1(N,M)$$
    the first map must be an isomorphism of one-dimensional spaces and the restriction map to $N$ must be trivial, as desired.
\end{example}

\section{Some Lemmas}\label{SomeLemmas}
The running hypotheses and notes for this section are:
\begin{itemize}[leftmargin=*]
    \item $p$ is a prime. 
    \item $A$ is a finite cardinality $\Z_p[G_K]$-module. 
    \item $T$ is a finite set of tame primes of $K$ that contains all the primes that ramify in $L/K$.
\end{itemize}

Lemma~\ref{lemma:ShaInj} is used throughout the paper, including in the examples in \S\ref{examples}. 
Lemmas~\ref{lemma:coh} and~\ref{lemma:equiv} play a crucial role in the proofs of Proposition~\ref{thm: main} and  
Theorem~\ref{thm:main2}.

Since $T$ contains all the primes that ramify in $L/K$ and $L$ is the splitting field of $A$, the group $G_{L, \tilde{T}}$ acts trivially on $A$. Therefore,
$
 A \simeq \oplus_i\Z/p^{n_i}\Z
$
as a $\Z_p[G_{L, \tilde{T}}]$-module, for some $n_i \geq 1$. 
\begin{lemma}\label{lemma:ShaInj}
With the above notation,
   $\Sha^2_{\tilde{T}, p}(L,A) \hookrightarrow \Sha^2_{\tilde{T}}(L,A)$. 
\end{lemma}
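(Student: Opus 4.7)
The plan is to realize the desired map as the natural inflation and to prove the stronger statement that the global inflation $H^2(G_{L,\tilde T}(p),\F_p)\to H^2(G_{L,\tilde T},\F_p)$ is itself injective. First I would observe that the local inflations $H^2(G_{L_\p}(p),\F_p)\to H^2(G_{L_\p},\F_p)$ intertwine with global inflation via the restriction maps, so inflation carries $\Sha^2_{\tilde T,p}(L,\F_p)$ into $\Sha^2_{\tilde T}(L,\F_p)$: if $\alpha$ restricts to zero in each $H^2(G_{L_\p}(p),\F_p)$, then $\inf(\alpha)$ restricts to $\inf(0)=0$ in each $H^2(G_{L_\p},\F_p)$. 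Injectivity of global inflation then trivially restricts to injectivity on the $\Sha$-subgroups.

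Next I would apply the Lyndon-Hochschild-Serre five-term exact sequence to $1\to R\to G_{L,\tilde T}\to G_{L,\tilde T}(p)\to 1$, where $R$ denotes the kernel of the projection onto the maximal pro-$p$ quotient. Because any continuous homomorphism $G_{L,\tilde T}\to\F_p$ factors through the maximal pro-$p$ quotient $G_{L,\tilde T}(p)$, the first inflation $H^1(G_{L,\tilde T}(p),\F_p)\to H^1(G_{L,\tilde T},\F_p)$ is an isomorphism. Exactness then identifies the kernel of $\inf$ on $H^2$ with $H^1(R,\F_p)^{G_{L,\tilde T}(p)}$, and the lemma reduces to showing this invariant module vanishes.

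The key point, which I regard as the crux rather than a serious obstacle, is this vanishing. I would argue by contradiction: a nonzero class is represented by a continuous $G_{L,\tilde T}(p)$-equivariant homomorphism $\varphi\colon R\to\F_p$. Equivariance forces $\ker\varphi$ to be stable under conjugation by $G_{L,\tilde T}$, hence normal in $G_{L,\tilde T}$ of index $p$ in $R$. The quotient $G_{L,\tilde T}/\ker\varphi$ then fits into
\[
1\to\F_p\to G_{L,\tilde T}/\ker\varphi\to G_{L,\tilde T}(p)\to 1,
\]
so it is itself pro-$p$. The defining maximality of $G_{L,\tilde T}(p)$ as the largest pro-$p$ quotient of $G_{L,\tilde T}$ now forces $\ker\varphi\supseteq R$, contradicting the assumption that $\varphi$ was nonzero. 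Notice that neither the tameness of $\tilde T$ nor the local structure at primes in $\tilde T$ plays any role, which is consistent with the paper's assertion that the lemma is ``easy'' and explains why the same argument would go through verbatim with $\F_p$ replaced by any finite trivial $p$-primary coefficient module.
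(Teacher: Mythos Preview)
Your proposal is correct and follows essentially the same approach as the paper: both arguments use the five-term inflation--restriction sequence for $1\to R\to G_{L,\tilde T}\to G_{L,\tilde T}(p)\to 1$ and reduce to showing $H^1(R,\F_p)^{G_{L,\tilde T}(p)}=0$ via the maximality of the pro-$p$ quotient. The paper phrases the contradiction field-theoretically (a nontrivial invariant class cuts out a $\Z/p\Z$-extension of $L_{\tilde T}(p)$ inside $L_{\tilde T}$, Galois over $L$), while you phrase it group-theoretically (the quotient by $\ker\varphi$ is a strictly larger pro-$p$ quotient), but these are the same argument.
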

\begin{proof}
    Let $J$ be the kernel of $G_{L,{\tilde{T}}} \twoheadrightarrow G_{L,{\tilde{T}}}(p)$. Any 
    $0 \neq \alpha \in H^1(J, A)^{G_{L,{\tilde{T}}}(p)}$ gives rise to a non-trivial extension of $L_{\tilde{T}}(p)$ lying within $L_{\tilde{T}}$ and Galois over $L$. This contradicts the maximality of $L_{\tilde{T}}(p)$, so $H^1(J,A)^{G_{L,{\tilde{T}}}(p)}=0$. From the inflation-restriction sequence we have
    $H^2(G_{L,{\tilde{T}}}(p),A) \hookrightarrow H^2(G_{L,{\tilde{T}}},A)$. It is easy to see elements of  $\Sha^2_{{\tilde{T}}, p}(L,A)$ inflate to elements of
     $\Sha^2_{\tilde{T}}(L,A)$.
\end{proof}

\begin{lemma}\label{lemma:coh} 
    If $(\#\Gamma, p) = 1$, then:
    \begin{enumerate}[(i)]
        \item $H^i(G_{K, T}, A) \simeq H^i(G_{L, \tilde{T}}, A)^\Gamma$, for all $i \geq 1$.
        \item Let $\q \in T$. Then $H^i(G_{K_\q}, A) \simeq \left(\prod_{{\tilde{\q}} | \q} H^i(G_{L_{\tilde{\q}}}, A) \right)^\Gamma$, for all $i \geq 1$. 
        \item Let $\q \in T$. Then $H^1_{nr}(G_{K_\q}, A) \simeq \left( \prod_{{\tilde{\q}} | \q} H^1_{nr}(G_{L_{\tilde{\q}}}, A) \right)^\Gamma$.
    \end{enumerate}
\end{lemma}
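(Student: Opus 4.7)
\textbf{Proof plan for Lemma \ref{lemma:coh}.}

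The unifying principle is that whenever $1 \to H \to G \to Q \to 1$ is a short exact sequence of profinite groups with $Q$ finite and $(\#Q,p)=1$, the Hochschild--Serre spectral sequence $E_2^{r,s}=H^r(Q,H^s(H,A)) \Rightarrow H^{r+s}(G,A)$ degenerates on $p$-primary coefficients, since $\#Q$ acts invertibly on $A$ (and hence on all $H^s(H,A)$), forcing $E_2^{r,s}=0$ for $r\ge 1$. This immediately yields $H^i(G,A) \simeq H^i(H,A)^Q$ for every $i\ge 0$. I will apply this recipe three times.

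For \emph{(i)}, the hypothesis that $T$ contains all primes ramified in $L/K$ gives $L\subset K_T$, so $L_{\tilde T}=K_T$ and there is an exact sequence $1 \to G_{L,\tilde T} \to G_{K,T} \to \Gamma \to 1$. Applying the principle above with $Q=\Gamma$ gives \emph{(i)}.

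For \emph{(ii)}, fix a prime $\q_0 \in \tilde T$ above $\p$, and let $\Gamma_\p \subset \Gamma$ be its decomposition group, so $\Gamma_\p = \Gal(L_{\q_0}/K_\p)$ and there is an exact sequence $1 \to G_{L_{\q_0}} \to G_{K_\p} \to \Gamma_\p \to 1$. Since $\#\Gamma_\p \mid \#\Gamma$ is coprime to $p$, the same recipe gives $H^i(G_{K_\p},A) \simeq H^i(G_{L_{\q_0}},A)^{\Gamma_\p}$. A standard orbit-stabilizer argument identifies $\prod_{\q\mid\p} H^i(G_{L_\q},A)$ with the induced module $\mathrm{Ind}_{\Gamma_\p}^{\Gamma} H^i(G_{L_{\q_0}},A)$, whose $\Gamma$-invariants are precisely $H^i(G_{L_{\q_0}},A)^{\Gamma_\p}$, giving \emph{(ii)}.

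For \emph{(iii)}, let $\mathcal{T}_{\q_0}(L)=\mathcal{T}_\p(K)\cap G_{L_{\q_0}}$ (since $\mathcal{T}_\p(K) \trianglelefteq G_{K_\p}$, the intersection is normal in both $G_{K_\p}$ and $\mathcal{T}_\p(K)$). The quotient $Q:=\mathcal{T}_\p(K)/\mathcal{T}_{\q_0}(L)$ is the inertia subgroup of $\Gamma_\p$ and hence has order coprime to $p$, so the recipe also gives $H^1(\mathcal{T}_\p(K),A) \simeq H^1(\mathcal{T}_{\q_0}(L),A)^Q$, and in particular $H^1(\mathcal{T}_\p(K),A) \hookrightarrow H^1(\mathcal{T}_{\q_0}(L),A)$. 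I then contemplate the commutative square
\[
\begin{tikzcd}
H^1(G_{K_\p},A) \arrow[r] \arrow[d,"\simeq"'] & H^1(\mathcal{T}_\p(K),A) \arrow[d,hook] \\
H^1(G_{L_{\q_0}},A)^{\Gamma_\p} \arrow[r] & H^1(\mathcal{T}_{\q_0}(L),A)^{\Gamma_\p}
\end{tikzcd}
\]
whose left vertical is the isomorphism from \emph{(ii)} and whose right vertical is injective. A direct diagram chase shows that the kernel of the top row maps isomorphically onto the kernel of the bottom row: injectivity comes from the left isomorphism, and surjectivity uses the injectivity of the right vertical to conclude that a lift whose image dies in $H^1(\mathcal{T}_{\q_0}(L),A)$ already dies in $H^1(\mathcal{T}_\p(K),A)$. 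Finally, since $H^1_{nr}$ behaves functorially under restriction, the induced-module argument from \emph{(ii)} again converts $H^1_{nr}(G_{L_{\q_0}},A)^{\Gamma_\p}$ into $\bigl(\prod_{\q\mid\p} H^1_{nr}(G_{L_\q},A)\bigr)^{\Gamma}$, finishing \emph{(iii)}.

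The only non-formal step is the last diagram chase in \emph{(iii)}; everything else is a mechanical application of the vanishing of $H^r(\Gamma,-)$ on $p$-primary modules together with Shapiro-style identification of induced invariants. I expect this diagram chase to be the main obstacle, specifically verifying that the unramified subspace on the top matches the $\Gamma_\p$-invariants of the unramified subspace on the bottom, which ultimately rests on the injectivity of $H^1(\mathcal{T}_\p(K),A)\hookrightarrow H^1(\mathcal{T}_{\q_0}(L),A)$ coming from the coprimality of $\#Q$ and $p$.
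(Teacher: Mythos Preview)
Your proof is correct and follows essentially the same route as the paper: both use the degeneration of Hochschild--Serre (the paper cites \cite[Lemma~2.1.2]{MR2392026}) for \emph{(i)} and \emph{(ii)}, identify the product over $\q\mid\p$ with an induced module/transitive $\Gamma$-set for \emph{(ii)}, and for \emph{(iii)} run a diagram chase on a commutative square whose key input is the injectivity of restriction $H^1(\mathcal{T}_\p(K),A)\hookrightarrow H^1(\mathcal{T}_{\q_0}(L),A)$ coming from $(\#\tilde\Gamma_\p,p)=1$. The only cosmetic difference is the orientation of that injection in the diagram (you go $K\to L$, the paper writes it $L\to K$ after taking $\Gamma$-invariants), and your label $H^1(\mathcal{T}_{\q_0}(L),A)^{\Gamma_\p}$ on the bottom-right corner is a bit loose (the restriction only lands in the $Q=\tilde\Gamma_\p$-invariants), but your verbal diagram chase uses only the injectivity into $H^1(\mathcal{T}_{\q_0}(L),A)$ and is therefore unaffected.
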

\begin{proof}
    {\it (i)}: As $(\#\Gamma,\#A)=1$,  we have for $m>0$ and $n \geq 0$ that  $H^m(\Gamma, H^n(G_{L, \tilde{T}}, A)) = 0$. Combining this with \cite[Lemma 2.1.2]{MR2392026} (the spectral sequence degenerates), we  conclude that $H^i(G_{K, T}, A) \simeq H^i(G_{L, \tilde{T}}, A)^\Gamma$, for all $i \geq 1$.  

    {\it (ii)}: Since $L/K$ is Galois, the decomposition groups $\Gal(L_{\tilde{\q}}/K_\q)$ are canonically isomorphic up to conjugacy for all ${\tilde{\q}} | \q$. Abusing notation, let $\Gamma_\q = \Gal(L_{\tilde{\q}}/K_\q) \subset \Gamma$. By \cite[Lemma 2.1.2]{MR2392026} again, $H^i(G_{K_\q}, A) \simeq (H^i(G_{L_{\tilde{\q}}}, A))^{\Gamma_\q}$ for $i \geq 1$. As $\Gamma$ acts transitively on the  $\# \Gamma/\Gamma_\q$ primes ${\tilde{\q}}$ above $\q$ in $\tilde{T}$,  the result follows. 

    {\it (iii)}:  Recall $\mathcal{I}_\q(K)$ and $\mathcal{I}_{\tilde{\q}}(L)$ are the inertia subgroups of the absolute local Galois groups $G_{K_\q}$ and $G_{L_{\tilde{q}}}$.
    Set ${\mathcal I}(\Gamma_\q)\subset \Gamma_\q := \Gal(L_{\tilde{\q}}/K_\q) \subset \Gamma$ to be the inertia subgroup of $\Gamma = \Gal(L/K)$ at $\q$ (as  $L/K$ is Galois, all the inertia subgroups are canonically isomorphic up to conjugacy). 
    By part {\it (ii)} and \cite[Lemma 2.1.2]{MR2392026}, we see that $H^1(G_{K_\q}, A) \simeq H^1(G_{L_{\tilde{\q}}}, A)^{\Gamma_\q}$ 
    and $H^1(\mathcal{I}_\q(K), A) \simeq H^1(\mathcal{I}_{\tilde{\q}}(L), A)^{{\mathcal I}(\Gamma_\q)}$. 
    Since ${\mathcal I}(\Gamma_\q) \subset \Gamma_\q$ we have   
    $H^1(\mathcal{I}_{\tilde{\q}}(L), A)^{\Gamma_\q} \subset H^1(\mathcal{I}_{\tilde{\q}}(L), A)^{{\mathcal I}(\Gamma_\q)}$. 
    Using the fact that there are $\#\Gamma/\Gamma_\q$ primes ${\tilde{\q}}$ above $\q$ in $\tilde{T}$, we observe that 
    $$\left(\prod_{{\tilde{\q}} | \q} H^1(\mathcal{I}_{\tilde{\q}}(L), A)\right)^\Gamma \simeq H^1(\mathcal{I}_{\tilde{\q}}(L), A)^{\Gamma_\q} 
    \hookrightarrow H^1(\mathcal{I}_{\tilde{\q}}(L), A)^{{\mathcal I}(\Gamma_\q)}
    \simeq H^1(\mathcal{I}_\q(K), A).$$
    Combining this with the fact that $H^1(G_{K_\q}, A) \simeq (\prod_{{\tilde{\q}} | \q} H^1(G_{L_{\tilde{\q}}}, A))^\Gamma$, we obtain the following commutative diagram: 
    \[
    \begin{tikzcd}
        H^1_{nr}(G_{K_\q}, A) \arrow[d, hook] & \left( \prod_{{\tilde{\q}} | \q} H^1_{nr}(G_{L_{\tilde{\q}}}, A) \right)^\Gamma \arrow[d, hook] \\
        H^1(G_{K_\q}, A) \arrow[d] \arrow[r, "\simeq"] & \left( \prod_{{\tilde{\q}} | \q} H^1(G_{L_{\tilde{\q}}}, A) \right)^\Gamma \arrow[d] \\
        H^1(\mathcal{I}_\q(K), A) & \left( \prod_{{\tilde{\q}} | \q} H^1(\mathcal{I}_{\tilde{\q}}(L), A) \right)^\Gamma \arrow[l, hook']
    \end{tikzcd}
    \]
    Note that the exactness of the right column follows from the left exactness of taking $\Gamma$-invariants. A  diagram chase shows that $H^1_{nr}(G_{K_\q}, A) \simeq (\prod_{{\tilde{\q}} | \q} H^1_{nr}(G_{L_{\tilde{\q}}}, A))^\Gamma$. 
\end{proof}

\begin{lemma}\label{lemma:equiv}
    Let $A$ be a finite cardinality $\Z_p[G_K]$-module and let $T$ be a finite set of tame primes of $K$ that contains all the ramified primes of $L/K$. If $p = 2$, assume, moreover, that $4A = 0$. 
    \begin{enumerate}[(i)]
        \item There are $\Gamma$-equivariant  inclusions $\Sha_{\tilde{T}, p}^2(L, A) \hookrightarrow \Sha_{\tilde{T}}^2(L, A) \hookrightarrow \RusB_{\tilde{T}}(L, A)$. 
        \item $\Sha^2_{\tilde{T}, p}(L, A)^\Gamma \simeq \Sha^2(L_{\tilde{T}}(p)/K, A)$.
        \item $\Sha_{\tilde{T}}^2(L, A)^\Gamma \simeq \Sha_T^2(K, A)$.
        \item $\RusB_{\tilde{T}}(L, A)^\Gamma \simeq \RusB_T(K, A)$.
    \end{enumerate}
\end{lemma}
\begin{proof}
    {\it (i)}: Since $A$ is a trivial $\Z_p[G_L]$-module, we can apply Lemma~\ref{lemma:ShaInj} to get an injection $\Sha^2_{\tilde{T}, p}(L, A) \hookrightarrow \Sha^2_{\tilde{T}}(L, A)$ induced by the inclusion $H^2(G_{L, \tilde{T}}(p), A) \hookrightarrow H^2(G_{L, \tilde{T}}, A)$. As cohomology respects the conjugation by $\Gamma = \Gal(L/K)$, this inclusion is $\Gamma$-equivariant. The second map follows from Theorem~\ref{thm:Shaisinrusb}{\it (ii)} and Example~\ref{Sha=0}(ii) and (iii), as follows. Since $A$ is a trivial $\Z_p[G_L]$-module, 
    $\displaystyle \Sha^1_{all}(L, A') \simeq \oplus_i \Sha^1_{all}(L, \mu_{p^{n_i}})$.
    When $p$ is odd, by Example~\ref{Sha=0}(ii), $\Sha^1_{all}(L, A') = 0$. When $p = 2$ and $4A = 0$, we get $4A' = 0$, so Example~\ref{Sha=0}(iii) applies and $\Sha^1_{all}(L, A') = 0$. In either case
    we apply Theorem~\ref{thm:Shaisinrusb} to get a $\Gamma$-equivariant injection $\Sha^2_{\tilde{T}}(L, A) \hookrightarrow \RusB_{\tilde{T}}(L, A)$.
    
    {\it (ii)}: 
    We  use \cite[Lemma 2.1.2]{MR2392026} to observe that $H^2(G_{L,\tilde{T}}(p)\rtimes \Gamma, A) \simeq H^2(G_{L, \tilde{T}}(p), A)^\Gamma$. Using Lemma \ref{lemma:coh}{\it (ii)} we obtain:
    \begin{align*}
        \Sha^2(L_{\tilde{T}}(p)/K, A) &= \ker\left(H^2(G_{L, \tilde{T}}(p) \rtimes \Gamma, A) \to \prod_{\q \in T}H^2(G_{K_\q}, A) \right) \\
        &\simeq \ker\left(H^2(G_{L, \tilde{T}}(p), A)^\Gamma \to \prod_{\q \in T}\left( \prod_{{\tilde{\q}} | \q} H^2(G_{L_{\tilde{\q}}}, A) \right)^\Gamma \right) \\
        & \simeq \ker \left( H^2(G_{L, \tilde{T}}(p), A) \to \prod_{{\tilde{\q}} \in \tilde{T}} H^2(G_{L_{\tilde{\q}}}, A)\right)^\Gamma \\
        &= \Sha^2_{\tilde{T}, p}(L, A)^\Gamma,
    \end{align*}
where the second isomorphism follows from left exactness of taking $\Gamma$-invariants.

    {\it (iii)}: We  use Lemma~\ref{lemma:coh} parts {\it (i)} and {\it (ii)}:  
\begin{align*}
    \Sha_T^2(K, A) &= \ker \left( H^2(G_{K, T}, A) \to \prod_{\q \in T} H^2(G_{K_\q}, A) \right) \\
    & \simeq \ker \left( H^2(G_{L, \tilde{T}}, A)^\Gamma \to \prod_{\q \in T} \left(\prod_{{\tilde{\q}} | \q} H^2(G_{L_{\tilde{\q}}}, A)\right)^\Gamma \right) \\
    & \simeq \ker \left(H^2(G_{L, \tilde{T}}, A) \to \prod_{{\tilde{\q}} \in \tilde{T}} H^2(G_{L_{\tilde{\q}}}, A) \right)^\Gamma \\
    & = \Sha_{\tilde{T}}^2(L, A)^\Gamma,
\end{align*}
where the second isomorphism follows from left exactness of taking $\Gamma$-invariants.

{\it (iv)}: We use Lemma~\ref{lemma:coh}, parts {\it (i)}, {\it (ii)}, and {\it (iii)}: 
\begin{align*}
    \RusB_T(K, A) &= \coker \left( \prod_{\q \in T} H^1(G_{K_\q}, A) \times \prod_{\q \notin T} H^1_{nr}(G_{K_\q}, A) \to H^1(G_K, A')^\vee\right) \\
    & \simeq \coker \left( \prod_{\q \in T} \left(\prod_{{\tilde{\q}} | \q} H^1(G_{L_{\tilde{\q}}}, A)\right)^\Gamma \times \prod_{\q \notin T}\left(\prod_{{\tilde{\q}} | \q} H^1_{nr}(G_{L_{\tilde{\q}}}, A)\right)^\Gamma \to (H^1(G_L, A')^\vee)^\Gamma \right) \\
    & \simeq \coker \left( \prod_{{\tilde{\q}} \in \tilde{T}} H^1(G_{L_{\tilde{\q}}}, A) \times \prod_{{\tilde{\q}} \notin \tilde{T}} H^1_{nr}(G_{L_{\tilde{\q}}}, A) \to H^1(G_L, A')^\vee \right)^\Gamma \\
    &= \RusB_{\tilde{T}}(L, A)^\Gamma. 
\end{align*}
The second isomorphism uses the fact that since $(\#\Gamma,p)=1$,  taking $\Gamma$-invariants on finite cardinality $\Z_p[\Gamma]$-modules is exact.
\end{proof}

\section{Proof of Main Results}\label{MainResults}
The running hypotheses and notes for this section are:
\begin{itemize}[leftmargin=*]
    \item $p$ is an odd prime. This is required to invoke the main result of
    \cite{MR4359923}. See \S \ref{p=2} for $p=2$. 
    \item $A$ is a finite cardinality $\F_p[G_K]$-module. We  restrict ourselves to this case because the main result of \cite{MR4359923} is currently only known for $\F_p$. 
    \item $S$ is a finite set (possibly empty) of tame primes of $K$; $\tilde{S}$ is the set of places of $L$ above $S$. 
\end{itemize}

Since $G_L$ acts trivially on the $\F_p[G_K]$-module $A$, we can now consider the groups $\Sha^2_{\tilde{S}, p}(L, \F_p)$, $\Sha^2_{\tilde{S}}(L, \F_p)$ and $\RusB_{\tilde{S}}(L, \F_p)$ with the usual inclusions 
\[
\Sha^2_{\tilde{S}, p}(L, \F_p) \hookrightarrow \Sha_{\tilde{S}}^2(L, \F_p) \hookrightarrow \RusB_{\tilde{S}}(L, \F_p).
\]
Use \cite[Theorem A]{MR4359923} to construct a tame set $X'$ of primes of $L$ such that: 
\begin{center}
\begin{tikzcd} \Sha_{\tilde{S}, p}^2(L, \F_p) \arrow[r, hook] & \Sha^2_{\tilde{S}}(L, \F_p) \arrow[r, hook] & \RusB_{\tilde{S}}(L, \F_p) \arrow[d, two heads, "\simeq"] \\
\Sha_{\tilde{S} \cup X^\prime, p}^2(L, \F_p) \arrow[r, hook, "\simeq"] & \Sha_{\tilde{S} \cup X^\prime}^2(L, \F_p) \arrow[r, hook, "\simeq"] & \RusB_{\tilde{S}\cup X^\prime}(L, \F_p). 
\end{tikzcd}
\end{center}
Note that $X'$ is a set of primes of $L$ that is not necessarily $\Gamma$-stable. Let $\tilde{X}$ be the $\Gamma$-orbit of  $X'$.
By \cite[Corollary 1.17]{MR4359923}, $\Sha^2_{\tilde{S} \cup \tilde{X}, p}(L, \F_p) \stackrel{\simeq}{\hookrightarrow} \Sha_{\tilde{S} \cup \tilde{X}}^2(L, \F_p) \stackrel{\simeq}{\hookrightarrow} \RusB_{\tilde{S}\cup \tilde{X}}(L, \F_p)$. We claim that enlarging the set $X'$ to make it $\Gamma$-stable does not decrease the size of the group $\RusB_{\tilde{S}\cup X'}(L, \F_p)$: 
\begin{lemma}\label{RusBpreserved}
    With the above notation, 
    $\RusB_{\tilde{S}}(L, \F_p) \simeq \RusB_{\tilde{S} \cup X^\prime} (L, \F_p) \simeq \RusB_{\tilde{S} \cup \tilde{X}} (L, \F_p)$. 
\end{lemma}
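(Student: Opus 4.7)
The plan is to dualize and work with the $V$-groups, using the identification $(V_T(L, \F_p)/L^{\times p})^\vee \simeq \RusB_T(L, \F_p)$ from \cite[Proposition 8.3]{MR4896734} to translate the assertion into the equality of subgroups $V_{\tilde{S} \cup X'}(L, \F_p) = V_{\tilde{S} \cup \tilde{X}}(L, \F_p)$ of $L^\times$.

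The inclusion $V_{\tilde{S} \cup \tilde{X}}(L, \F_p) \subseteq V_{\tilde{S} \cup X'}(L, \F_p)$ is immediate since $X' \subseteq \tilde{X}$ and imposing additional local $p$-th power conditions can only shrink $V_T$. For the reverse inclusion, I would first exploit the isomorphism $\RusB_{\tilde{S}}(L, \F_p) \stackrel{\simeq}{\twoheadrightarrow} \RusB_{\tilde{S} \cup X'}(L, \F_p)$ built into the construction of $X'$ via \cite[Theorem A]{MR4359923}. Dualizing gives an inclusion $V_{\tilde{S} \cup X'}(L, \F_p)/L^{\times p} \hookrightarrow V_{\tilde{S}}(L, \F_p)/L^{\times p}$ of finite $\F_p$-vector spaces of equal dimension, so in fact $V_{\tilde{S}}(L, \F_p) = V_{\tilde{S} \cup X'}(L, \F_p)$ as subgroups of $L^\times$.

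The key structural observation is that $V_{\tilde{S}}(L, \F_p)$ is $\Gamma$-stable: the condition $v_\q(x) \equiv 0 \bmod p$ is imposed at every prime $\q$ of $L$ and so is visibly $\Gamma$-invariant, while the local $p$-th power conditions are imposed on the $\Gamma$-stable set $\tilde{S}$. Moreover, for any $\gamma \in \Gamma$ and any prime $\q'$ of $L$, the induced isomorphism $\gamma \colon L_{\q'} \stackrel{\simeq}{\to} L_{\gamma \q'}$ carries $L_{\q'}^{\times p}$ onto $L_{\gamma \q'}^{\times p}$, so the property of being a local $p$-th power transports along the $\Gamma$-orbit.

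Given any $x \in V_{\tilde{S} \cup X'}(L, \F_p) = V_{\tilde{S}}(L, \F_p)$ and any $\q \in \tilde{X}$, I would write $\q = \gamma \q'$ with $\gamma \in \Gamma$ and $\q' \in X'$. By $\Gamma$-stability, $\gamma^{-1}(x) \in V_{\tilde{S}}(L, \F_p) = V_{\tilde{S} \cup X'}(L, \F_p)$, hence $\gamma^{-1}(x) \in L_{\q'}^{\times p}$; applying $\gamma$ yields $x \in L_\q^{\times p}$. Thus $x \in V_{\tilde{S} \cup \tilde{X}}(L, \F_p)$, and the lemma follows after redualizing. The only real content is the initial translation via the isomorphism of \cite[Theorem A]{MR4359923}; once the problem is phrased in terms of $V$-groups, the remainder is a formal $\Gamma$-orbit chase with no anticipated obstacle.
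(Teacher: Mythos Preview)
Your proof is correct but takes a different route from the paper's. Both arguments dualize to the $V$-groups and reduce to showing $V_{\tilde{S}\cup X'}(L,\F_p)\subseteq V_{\tilde{S}\cup \tilde{X}}(L,\F_p)$. The paper argues via Kummer theory: for $x\in V_{\tilde{S}\cup X'}$ it passes to $L(\zeta_p,\sqrt[p]{x})$ and its Galois closure $M$ over $K$, then uses that $M/K$ is Galois to propagate complete splitting of primes above $\q\in X'$ to their $\Gamma$-conjugates, and finally descends back to the local $p$th-power condition. You instead invoke the isomorphism $\RusB_{\tilde{S}}(L,\F_p)\simeq\RusB_{\tilde{S}\cup X'}(L,\F_p)$ already available from \cite[Theorem~A]{MR4359923} to get $V_{\tilde{S}}=V_{\tilde{S}\cup X'}$, observe that $V_{\tilde{S}}$ is visibly $\Gamma$-stable because $\tilde{S}$ is, and then transport the local condition along the $\Gamma$-orbit by a one-line computation on elements.

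Your argument is more elementary---it avoids the detour through Kummer extensions and Galois closures---and it makes explicit the dependence on the equality $V_{\tilde{S}}=V_{\tilde{S}\cup X'}$ coming from the specific construction of $X'$. Indeed, the lemma's conclusion is false for a generic set $X'$, so some such input is needed; the paper's Galois-closure step ``those primes also split completely in $M/L(\zeta_p)$'' ultimately rests on the same fact (one needs all $\gamma(x)$ to be local $p$th powers at $\q$), whereas your version isolates this point cleanly. The paper's approach, on the other hand, has the virtue of being phrased purely in terms of splitting behavior, which fits naturally with the governing-field philosophy used elsewhere.
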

\begin{proof} 
    We work with the duals instead. In the proof of \cite[Prop 2.1]{MR4359923},
     the primes of $X'$ are chosen to split completely in the governing extension 
    $L\left(\zeta_p,\sqrt[p]{V_{\tilde{S}}(L,\F_p)}\right)/L$. 
    Note that as $\tilde{S}$ is $\Gamma$-stable, 
    $L\left(\zeta_p,\sqrt[p]{V_{\tilde{S}}(L,\F_p)}\right)/K$ is Galois.
    Thus the primes of $\Gamma \cdot X'=\tilde{X}$ also split completely in 
    $L\left(\zeta_p,\sqrt[p]{V_{\tilde{S}}(L,\F_p)}\right)/L$, 
    so $V_{\tilde{S}\cup \tilde{X}}(L,\F_p)=V_{\tilde{S}}(L,\F_p)$ as needed.
\end{proof}

We have: 
\begin{center}
\begin{tikzcd} \Sha^2_{\tilde{S}, p}(L, \F_p) \arrow[r, hook] &  \Sha^2_{\tilde{S}}(L, \F_p) \arrow[r, hook] & \RusB_{\tilde{S}}(L, \F_p) 
\arrow[d, two heads, "\simeq"]
 \\
\Sha^2_{\tilde{S} \cup X', p}(L, \F_p) \arrow[r, hook, "\simeq"] & \Sha_{\tilde{S} \cup X^\prime}^2(L, \F_p) \arrow[r, hook, "\simeq"] & \RusB_{\tilde{S}\cup X^\prime}(L, \F_p) \arrow[d, two heads, "\simeq"]\\
\Sha^2_{\tilde{S} \cup \tilde{X}, p}(L, \F_p) \arrow[r, hook, "\simeq"] & \Sha^2_{\tilde{S} \cup \tilde{X}}(L, \F_p) \arrow[r, hook, "\simeq"] & \RusB_{\tilde{S} \cup \tilde{X}}(L, \F_p)
\end{tikzcd}
\end{center}

\begin{proof}[Proof of Proposition \ref{thm: main}]
Let $X$ be the set of primes of $K$ lying below $\tilde{X}$. Recall that $G_L$ acts trivially on $A$. By Lemma~\ref{lemma:equiv}{\it (i)} the following diagram is $\Gamma$-equivariant: 
\begin{equation}
 \label{eqn:equivariant}
 \Sha^2_{\tilde{S}, p}(L, A) \hookrightarrow \Sha^2_{\tilde{S}}(L, A) \hookrightarrow \RusB_{\tilde{S}}(L, A) \stackrel{\simeq}{\twoheadrightarrow} \RusB_{\tilde{S} \cup \tilde{X}}(L, A)  \stackrel{\simeq}{\hookleftarrow} \Sha^2_{\tilde{S}\cup \tilde{X}}(L, A)
 \stackrel{\simeq}{\hookleftarrow} \Sha^2_{\tilde{S} \cup \tilde{X}, p}(L, A).
\end{equation}
 We can take $\Gamma$-invariants of (\ref{eqn:equivariant}) to obtain
\[
\begin{tikzcd}
    \Sha^2_{\tilde{S}, p}(L, A)^\Gamma \arrow[r, hook] & \Sha^2_{\tilde{S}}(L,A)^\Gamma \arrow[r, hook] & \RusB_{\tilde{S}}(L, A)^\Gamma \arrow[d, two heads, "\simeq"] \\ \Sha^2_{\tilde{S} \cup \tilde{X}, p}(L, A)^\Gamma \arrow[r, hook, "\simeq"] & \Sha^2_{\tilde{S} \cup \tilde{X}}(L, A)^\Gamma \arrow[r, hook, "\simeq"] & \RusB_{\tilde{S} \cup \tilde{X}}(L, A)^\Gamma
\end{tikzcd}
\]
 proving Proposition~\ref{thm: main}. 

\end{proof}

\begin{proof}[Proof of Theorem \ref{thm:main2}]
To prove Theorem~\ref{thm:main2}, recall that $(\#\Gamma, p) = 1$, so by Corollary~\ref{primetop-sha=0} we know that $\Sha^1_{all}(K, A') = 0$, and so by Theorem~\ref{thm:Shaisinrusb} we have an injection $\Sha^2_S(K, A) \hookrightarrow \RusB_S(K, A)$. Now, take $\Gamma$-invariants of (\ref{eqn:equivariant}) and use  Lemma~\ref{lemma:equiv} to see
 \[
\begin{tikzcd}
    \Sha^2(L_{\tilde{S}}(p)/K, A) \arrow[r, hook] & \Sha^2_{S}(K,A) \arrow[r, hook] & \RusB_{S}(K, A) \arrow[d, two heads, "\simeq"] \\ \Sha^2(L_{\tilde{S}\cup \tilde{X}}(p)/K, A)\arrow[r, hook, "\simeq"] & \Sha^2_{S \cup X}(K, A) \arrow[r, hook, "\simeq"] & \RusB_{S \cup X}(K, A)
\end{tikzcd}
\]
\end{proof}

{Corollary~\ref{cor:isoarepreserved} below shows that once an isomorphism $\Sha^2(L_{\tilde{S}}(p)/K, A) \stackrel{\simeq}{\hookrightarrow} \RusB_S(K, A)$ is established, increasing the set $S$ by \textit{any} set $Y$ (tame or wild) preserves the isomorphism $\Sha^2(L_{\tilde{S} \cup \tilde{Y}}(p)/K, A) \stackrel{\simeq}{\hookrightarrow}\RusB_{S \cup Y}(K, A)$.}

\begin{corollary}\label{cor:isoarepreserved}
    If the composite injection $\Sha^2(L_{\tilde{S}}(p)/K, A) \hookrightarrow \Sha^2_S(K, A) \hookrightarrow \RusB_S(K,A)$ is an isomorphism, then for every set $Y$ of primes of $K$ we have $$\Sha^2(L_{\tilde{S} \cup \tilde{Y}}(p)/K, A) \stackrel{\simeq}{\hookrightarrow} \Sha^2_{S\cup Y}(K, A) \stackrel{\simeq}{\hookrightarrow} \RusB_{S\cup Y}(K, A).$$
\end{corollary}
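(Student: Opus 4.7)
The plan is to exhibit the commutative diagram
\[
\begin{tikzcd}
\Sha^2(L_{\tilde{S}}(p)/K, A) \arrow[r, hook, "\simeq"] \arrow[d] & \Sha^2_S(K, A) \arrow[r, hook, "\simeq"] \arrow[d] & \RusB_S(K, A) \arrow[d, two heads] \\
\Sha^2(L_{\tilde{S}\cup \tilde{Y}}(p)/K, A) \arrow[r, hook] & \Sha^2_{S \cup Y}(K, A) \arrow[r, hook] & \RusB_{S \cup Y}(K, A)
\end{tikzcd}
\]
and to conclude by a diagram chase. The right-hand vertical map is the natural surjection induced by enlarging the set in the cokernel definition of $\RusB$: at each prime $\q \in Y \setminus S$ the local factor $H^1_{nr}(G_{K_\q}, A)$ is replaced by the larger $H^1(G_{K_\q}, A)$, which enlarges the image of the defining restriction map and shrinks the cokernel. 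The bottom horizontal maps come from Theorem~\ref{thm:Shaisinrusb} and, in the case of the first inclusion, directly from the definition (using that $L_{\tilde{S}\cup\tilde{Y}}(p)\subseteq K_{S\cup Y}$). The top row is the assumed isomorphism.

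The key to constructing the two left vertical arrows is the identification $\Sha^2_T(K, A) = \ker(\beta_T)$, where $\beta_T \colon H^2(G_{K, T}, A) \to H^2(G_K, A)$ is inflation. The inclusion $\Sha^2_T(K, A) \subseteq \ker(\beta_T)$ is contained in the proof of Theorem~\ref{thm:Shaisinrusb} combined with $\RusB_{all}(K, A) = 0$; for the reverse, compatibility of inflation and restriction identifies $\beta_T(\gamma)|_{G_{K_\p}}$ with $\gamma|_\p$, so $\beta_T(\gamma) = 0$ forces $\gamma|_\p = 0$ for every $\p \in T$. Since $\beta_{S \cup Y} \circ \mathrm{inf} = \beta_S$ by transitivity of inflation along $G_K \twoheadrightarrow G_{K, S \cup Y} \twoheadrightarrow G_{K, S}$, inflation restricts to a well-defined map $\Sha^2_S(K, A) \to \Sha^2_{S \cup Y}(K, A)$. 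The analogous identification $\Sha^2(L_{\tilde T}(p)/K, A) = \ker(\beta'_T)$, where $\beta'_T \colon H^2(G_{L, \tilde T}(p) \rtimes \Gamma, A) \to H^2(G_K, A)$ is the composite inflation, follows by the same argument (using the natural inclusion $\Sha^2(L_{\tilde T}(p)/K, A) \hookrightarrow \Sha^2_T(K, A)$ to see that $\beta'_T$ vanishes on $\Sha^2(L_{\tilde T}(p)/K, A)$), and gives the leftmost vertical arrow. Commutativity of the two squares follows from transitivity of inflation and from the naturality of Liu's construction of $\Sha^2_T(K, A) \hookrightarrow \RusB_T(K, A)$ under enlarging $T$.

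With the diagram established, the chase is immediate. Since the top row is an isomorphism and the right column is surjective, the image of $\Sha^2(L_{\tilde{S}}(p)/K, A)$ in $\RusB_{S \cup Y}(K, A)$ via the top-then-right path is all of $\RusB_{S \cup Y}(K, A)$. By commutativity this image factors through $\Sha^2(L_{\tilde{S} \cup \tilde{Y}}(p)/K, A)$ and through $\Sha^2_{S \cup Y}(K, A)$, forcing both bottom injections to be surjective and hence isomorphisms. The main obstacle is the verification that inflation truly restricts to maps of the $\Sha^2$-subgroups, since a priori an inflated class could fail to vanish at primes in $Y \setminus S$; this is precisely the point at which the identification $\Sha^2_T = \ker(\beta_T)$, made possible by $\RusB_{all}(K, A) = 0$, is essential.
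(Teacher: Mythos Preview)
Your proof is correct and follows essentially the same route as the paper's: set up a commutative square with the natural surjection $\RusB_S(K,A)\twoheadrightarrow\RusB_{S\cup Y}(K,A)$ on one side and the inflation map $\Sha^2(L_{\tilde S}(p)/K,A)\to\Sha^2(L_{\tilde S\cup\tilde Y}(p)/K,A)$ on the other, then chase. The paper's diagram omits your middle column $\Sha^2_T(K,A)$ and instead cites the analogous lemmas in \cite{MR4359923} and \cite{MR4896734}, simply asserting that the image of $\Sha^2(L_{\tilde S}(p)/K,A)$ under inflation lands in $\Sha^2(L_{\tilde S\cup\tilde Y}(p)/K,A)$; your explicit justification of this step via the identification $\Sha^2_T(K,A)=\ker(\beta_T)$ (which indeed follows from $\RusB_{all}(K,A)=0$ in the proof of Theorem~\ref{thm:Shaisinrusb}) is a welcome clarification of exactly what the paper is taking for granted.
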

\begin{proof}
    This proof is inspired by the proofs of \cite[Lemma 1.16]{MR4359923} and \cite[Lemma 8.4]{MR4896734}. 
    By the inflation-restriction exact sequence, one has: 
    $$H^1(\Gal(L_{\tilde{S} \cup \tilde{Y}}(p)/L_{\tilde{S}}(p)), A)^{G_{L, \tilde{S}}(p) \rtimes \Gamma} \to H^2(G_{L, \tilde{S}}(p)\rtimes \Gamma, A) \to H^2(G_{L, \tilde{S} \cup \tilde{Y}}(p) \rtimes \Gamma, A).$$
    Note that the image of $H^1(\Gal(L_{\tilde{S} \cup \tilde{Y}}(p)/L_{\tilde{S}}(p)), A)^{G_{L, \tilde{S}}(p) \rtimes \Gamma}$ in $H^2(G_{L, \tilde{S}}(p) \rtimes \Gamma, A)$ lies in $\Sha^2(L_{\tilde{S}}(p)/K, A)$, whose image lies in $\Sha^2(L_{\tilde{S} \cup \tilde{Y}}(p)/K, A)$, so we have the following commutative diagram: 
    \begin{center}
\begin{tikzcd} [row sep=normal, column sep = small]
 H^1(\Gal(L_{\tilde{S} \cup \tilde{Y}}(p)/L_{\tilde{S}}(p)), A)^{G_{L, \tilde{S}}(p) \rtimes \Gamma} \arrow[r]  & \Sha^2(L_{\tilde{S}}(p)/K, A) \arrow[r] \arrow[d, hook] & \Sha^2(L_{\tilde{S}\cup \tilde{Y}}(p)/K, A) \arrow[d, hook] \\
 & \RusB_{S}(K, A) \arrow[r, two heads] & \RusB_{S \cup Y}(K, A)
\end{tikzcd}
\end{center}
We immediately obtain that if $\Sha^2(L_{\tilde{S}}(p)/K, A) \hookrightarrow \RusB_S(K, A)$ is an isomorphism, then so is $\Sha^2(L_{\tilde{S}\cup \tilde{Y}}(p)/K, A) \hookrightarrow \RusB_{S \cup Y}(K, A)$. 

\end{proof}

\section{The case $p = 2$}\label{p=2}
As is usual, the case $p=2$ presents additional technical difficulties. We retain the (stronger) hypotheses of Theorem~\ref{thm:main2} (and not of Proposition~\ref{thm: main}):
\begin{itemize}[leftmargin=*]
    \item $p=2$,
    \item $A$ is a finite cardinality  $\F_2[G_K]$-module  
    with splitting field $L:=K(A)$, and $\Gamma = \Gal(L/K)$,
    \item $(\#\Gamma,2)=1$,
    \item $S$ contains all ramified places of $L/K$.
\end{itemize}

The main result of this 
paper, Theorem~\ref{thm:main2}, is a generalization of \cite[Theorem A]{MR4359923}. Both of these results hold only for odd primes $p$, so it is natural to ask what happens when $p = 2$. In \cite{MR4359923}, the case $p = 2$ involves an exceptional situation. 
\begin{definition}
    For a field $K$ and a tame set $S$ of primes of $K$, the pair $(K,S)$  is called exceptional if $p = 2$ and the following conditions hold simultaneously: 
    \begin{enumerate}[label=(\alph*)]
        \item $\zeta_4 \notin K$,
        \item $\mathcal{O}_K^\times \cap -4K^4 \neq \emptyset$,
        \item $S$ contains no real place, and for every $\q \in S$ one has $\zeta_4 \in K_\q$. 
    \end{enumerate}
\end{definition}

It is shown in \cite[Theorem B]{MR4359923} that if $S$ is a set of tame primes of $K$ and the situation is exceptional, then there exist infinitely many finite tame sets $X$ such that 
$$d_2 \RusB_S(K, \F_2) - 1  \leq d_2 \Sha^2_{S \cup X, 2}(K, \F_2) = d_2 \Sha^2_{S \cup X}(K, \F_2) = d_2 \RusB_{S \cup X}(K, \F_2) \leq d_2 \RusB_S(K, \F_2).$$
If the situation is not exceptional, then the conclusion of \cite[Theorem A]{MR4359923} holds in this case too. 

In this section we prove a generalization of this result:
\begin{theorem}
    \begin{enumerate}[label=(\roman*)]
        \item If $(K,S)$ is not exceptional, or if the trivial module $\F_2$ does not occur in the decomposition of $A$ as a direct sum of irreducible $\F_2[\Gamma]$-modules, then there exist infinitely many tame sets $X$ of $K$ such that $$\Sha^2(L_{\tilde{S}\cup \tilde{X}}(p)/K, A) \stackrel{\simeq}{\hookrightarrow} \Sha^2_{S \cup X}(K, A) \stackrel{\simeq}{\hookrightarrow} \RusB_{S\cup X}(K, A) \stackrel{\simeq}{\twoheadleftarrow} \RusB_S(K, A).$$
        \item Let $\F_2$ be the trivial $\F_2[\Gamma]$-module, so $\F_2\otimes A \simeq A$ as $\F_2[\Gamma]$-modules.
        If $(K,S)$ is exceptional, then there exist infinitely many finite tame sets $X$ such that either
        $$\Sha^2(L_{\tilde{S}\cup \tilde{X}}(p)/K, A) \stackrel{\simeq}{\hookrightarrow} \Sha^2_{S \cup X}(K, A) \stackrel{\simeq}{\hookrightarrow} \RusB_{S\cup X}(K, A) \stackrel{\simeq}{\twoheadleftarrow} \RusB_S(K, A)$$
        or
        $$(\F_2 \otimes A)^\Gamma \oplus \Sha^2(L_{\tilde{S}\cup \tilde{X}}(p)/K, A) \stackrel{\simeq}{\hookrightarrow} A^\Gamma \oplus \Sha^2_{S \cup X}(K, A) \stackrel{\simeq}{\hookrightarrow}  A^\Gamma \oplus \RusB_{S \cup X}(K, A) \stackrel{\simeq}{\twoheadleftarrow} \RusB_S(K, A).$$
    \end{enumerate}
\end{theorem}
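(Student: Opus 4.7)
The plan is to adapt the proof of Theorem~\ref{thm:main2}, using \cite[Theorem A]{MR4359923} (which remains valid for $p=2$ in the non-exceptional case) or \cite[Theorem B]{MR4359923} over $L$ as appropriate, and to track the one-dimensional defect that can arise in the exceptional case, determining when it survives $\Gamma$-invariants.

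First I would verify that $(K,S)$ is exceptional if and only if $(L,\tilde{S})$ is exceptional, using that $[L:K]$ is odd.  Conditions (a) and (c) transfer in both directions because $[K(\zeta_4):K]\in\{1,2\}$ must divide the odd degrees $[L:K]$ and $[L_{\tilde{\mathfrak p}}:K_{\mathfrak p}]$, and real places remain real in odd-degree extensions.  For (b), the direction $K\Rightarrow L$ is automatic; conversely if $u\in\mathcal O_L^\times$ satisfies $u=-4\ell^4$, writing $[L:K]=2m+1$ yields
\[
N_{L/K}(u)=(-4)^{2m+1}\,N_{L/K}(\ell)^4=-4\bigl(2^{m}\,N_{L/K}(\ell)\bigr)^4\in\mathcal O_K^\times\cap -4K^{\times 4}.
\]

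In the non-exceptional case, \cite[Theorem A]{MR4359923} applies over $L$ with module $\F_2$, and the proof of Theorem~\ref{thm:main2} carries through verbatim: produce $X'$, pass to its $\Gamma$-orbit $\tilde{X}$ via Lemma~\ref{RusBpreserved}, upgrade from $\F_2$ to $A$ using $A\simeq\F_2^{\dim A}$ as $G_L$-modules and the resulting tensor identifications $H^\ast(G_L,A)\simeq H^\ast(G_L,\F_2)\otimes_{\F_2}A$ (and analogously locally, and for $\RusB$ and $\Sha^2$) as $\F_2[\Gamma]$-modules with diagonal action, invoke Lemma~\ref{lemma:equiv}(i) for $\Gamma$-equivariance, take $\Gamma$-invariants (exact since $(\#\Gamma,2)=1$), and descend via Lemma~\ref{lemma:equiv}(ii)--(iv).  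This settles part (i) whenever $(K,S)$ is non-exceptional and gives the first alternative of (ii).

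If $(L,\tilde{S})$ is exceptional, \cite[Theorem B]{MR4359923} gives $X'$ so that either the chain for $\F_2$ over $L$ is clean (handled as above) or the kernel $K_0$ of $\RusB_{\tilde{S}}(L,\F_2)\twoheadrightarrow\RusB_{\tilde{S}\cup X'}(L,\F_2)$ is one-dimensional, while the remaining maps in the chain are isomorphisms.  Since $\#\Gamma$ is odd, the only one-dimensional $\F_2[\Gamma]$-module is trivial, so $K_0\simeq\F_2$ with trivial $\Gamma$-action.  Tensoring with $A$ and passing to $\tilde{X}$ (whose effect on $\RusB$ is trivial by Lemma~\ref{RusBpreserved}), the kernel of $\RusB_{\tilde{S}}(L,A)\twoheadrightarrow\RusB_{\tilde{S}\cup\tilde{X}}(L,A)$ becomes $K_0\otimes_{\F_2}A\simeq A$ as $\Gamma$-modules, whose $\Gamma$-invariants are $A^\Gamma$.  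If $A^\Gamma=0$---equivalently, $\F_2$ is not a summand of $A$ in its semisimple $\F_2[\Gamma]$-decomposition---the defect vanishes and one obtains the remaining case of (i).  Otherwise, descending via Lemma~\ref{lemma:equiv} yields the second alternative of (ii), with each $A^\Gamma$ summand---written $(\F_2\otimes A)^\Gamma$ on the $\Sha^2(L_{\tilde{S}\cup\tilde{X}}(p)/K,A)$ side to record its trivial $\F_2$-origin---absorbing the lost one-dimensional kernel.

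The main obstacle I expect is the $\Gamma$-equivariant bookkeeping in the upgrade from $\F_2$- to $A$-coefficients, especially verifying the tensor identifications for $\RusB$ (where the dualization through $A'$ enters) as $\F_2[\Gamma]$-modules with the correct diagonal action, so that the $\Gamma$-invariants of the one-dimensional $\F_2$-defect yield exactly the predicted $A^\Gamma$ summand on each side of the chain over $K$.
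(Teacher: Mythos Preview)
Your proposal is correct and follows essentially the same route as the paper's proof: reduce the exceptionality question from $(K,S)$ to $(L,\tilde S)$ via the odd-degree norm computation, invoke \cite[Theorem B]{MR4359923} over $L$, enlarge $X'$ to its $\Gamma$-orbit $\tilde X$ using Lemma~\ref{RusBpreserved} and \cite[Corollary 1.17]{MR4359923}, tensor with $A$, and take $\Gamma$-invariants via Lemma~\ref{lemma:equiv}. Your explicit observation that the one-dimensional kernel $K_0$ must carry the trivial $\Gamma$-action (since $\#\Gamma$ is odd and $\F_2^\times$ is trivial) is a point the paper leaves implicit when it simply writes $\F_2\oplus(-)$ and then tensors; just be sure to pass to $\tilde X$ \emph{before} asserting $K_0$ is a $\Gamma$-submodule, since $\RusB_{\tilde S\cup X'}(L,\F_2)$ itself has no $\Gamma$-action until $X'$ is replaced by its orbit.
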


\begin{proof}
    The proof follows the same steps as the proof of Theorem~\ref{thm:main2}, with the caveat that we need to use \cite[Theorem B]{MR4359923} instead of \cite[Theorem A]{MR4359923} in the construction of the tame set $X'$ of primes of $L$. Note that the proof of Theorem~\ref{thm:main2} only used the statement of \cite[Theorem A]{MR4359923}, not needing the details that go into proving this result. In contrast, in the exceptional case of this result, we need to be careful when choosing the set $X'$.
    
    We first claim that the situation is exceptional for $(K,S)$ if and only if it is exceptional for $(L,\tilde{S})$. 
    Since $\Gamma = \Gal(L/K)$ has odd order,  $\zeta_4 \notin K$ (resp. $\zeta_4 \in K_\q$ for $\q \in S$) if and only if $\zeta_4 \notin L$ (resp. $\zeta_4 \in L_{\tilde{\q}}$, for ${\tilde{\q}} | \q$). Similarly, $S$ contains no real place if and only if $\tilde{S}$ contains no real place. Finally, 
    if $\mathcal{O}_K^\times \cap -4K^4 \neq \emptyset$, then clearly $\mathcal{O}_L^\times \cap -4L^4 \neq \emptyset$. On the other hand, if $ -4a^4 \in \mathcal{O}_L^\times \cap -4L^4$ (for $a \in L$), then taking 
    norms, we obtain, using that $\#\Gamma = 2k+1$,

    $$\displaystyle N_{L/K}(-4a^4)  = -4 \left(2^k N_{L/K}(a)\right)^{4} \in \mathcal{O}_K^\times \cap -4K^4.$$ 

    We prove parts {\it (i)} and {\it (ii)}  simultaneously. If the situation is not exceptional for $(K,S)$, then it is not exceptional for $(L,\tilde{S})$, so by \cite[Theorem B]{MR4359923}, there exist infinitely many sets $X'$ of primes of $L$ such that
    $$\Sha^2_{\tilde{S} \cup X', 2}(L, \F_2) \stackrel{\simeq}{\hookrightarrow}\Sha^2_{\tilde{S}\cup X'}(L, \F_2) \stackrel{\simeq}{\hookrightarrow} \RusB_{\tilde{S}\cup X'} (L, \F_2) \stackrel{\simeq}{\twoheadleftarrow} \RusB_{\tilde{S}}(L, \F_2).$$ The proof of the first part of {\it (i)} then proceeds exactly as the proof of Theorem~\ref{thm:main2}. We prove the second part of {\it (i)} at the end.

    To finish the proof of {\it {(ii)}}, we can therefore assume that the situation is exceptional for $(K,S)$ and also  $(L,\tilde{S})$. Use \cite[Theorem B]{MR4359923} to construct infinitely many sets $X'$ such that 
    $$d_2 \RusB_{\tilde{S}}(L, \F_2) - 1 \leq d_2 \Sha^2_{\tilde{S} \cup X', 2}(L, \F_2) = d_2 \Sha^2_{\tilde{S}\cup X'} (L, \F_2) = d_2 \RusB_{\tilde{S} \cup X'}(L, \F_2) \leq d_2 \RusB_{\tilde{S}}(L, \F_2).$$ 
    These sets $X'$ are chosen in very similar ways in both Theorem A and Theorem B of \cite{MR4359923}, with the difference that in the exceptional case one chooses an additional prime $\tilde{\mathfrak{r}}$ that remains inert in $L(i)/L$ and adds it to $X'$: now the pair $(L, \tilde{S} \cup X')$ is not exceptional anymore and the proof of \cite[Theorem B]{MR4359923} follows in the same way as the proof of \cite[Theorem A]{MR4359923}, with
    $d_2 \RusB_{\tilde{S} \cup X'}(L, \F_2)$ equal to either
    $d_2 \RusB_{\tilde{S}}(L, \F_2)-1$ or $d_2 \RusB_{\tilde{S}}(L, \F_2)$.
    
    Our situation imposes an extra difficulty: the group $\Gamma = \Gal(L/K)$ acts on this prime $\tilde{\mathfrak{r}}$, so adding its $\Gamma$-orbit $\tilde{X}$ to our set could potentially make 
    $\RusB_{\tilde{S}\cup \tilde{X}}(L, \F_2)$ smaller than $\RusB_{\tilde{S}\cup X'}(L, \F_2)$. To this end, consider the $\F_2[\Gamma]$-module $\Gal\left(L\left(\sqrt{V_{\tilde{S}}(L, \F_2)}\right)/L\right)$. 
    This can be decomposed as a direct sum of irreducible components, and observe that $\Gal(L(i)/L)$ will correspond to a trivial representation in this decomposition. 
    We can thus choose the prime $\tilde{\mathfrak{r}}$ to remain inert in the trivial copy corresponding to $\Gal(L(i)/L)$ and to have trivial Frobenius in all the other irreducible components. 
    Then the $\Gamma$-orbit of Frobenius at $\tilde{\mathfrak{r}}$ will span a one-dimensional space in
    $\Gal\left(L\left(\sqrt{V_{\tilde{S}}(L, \F_2)}\right)/L\right)$
    and $d_2\RusB_{\tilde{S}\cup \tilde{X}}(L,\F_2)$ will equal
    $d_2 \RusB_{\tilde{S}}(L, \F_2)-1$ or $d_2 \RusB_{\tilde{S}}(L, \F_2)$ as desired.
    It follows that   
    $$d_2 \RusB_{\tilde{S}}(L, \F_2) - 1 \leq d_2 \Sha^2_{\tilde{S} \cup \tilde{X}, 2}(L, \F_2) = d_2 \Sha^2_{\tilde{S}\cup \tilde{X}} (L, \F_2) = d_2 \RusB_{\tilde{S} \cup \tilde{X}}(L, \F_2) \leq d_2 \RusB_{\tilde{S}}(L, \F_2),$$
    so either
    $$\Sha^2_{\tilde{S} \cup \tilde{X}, 2}(L, \F_2) \stackrel{\simeq}{\hookrightarrow} \Sha^2_{\tilde{S} \cup \tilde{X}}(L, \F_2) \stackrel{\simeq}{\hookrightarrow} \RusB_{\tilde{S}\cup \tilde{X}}(L, \F_2) \stackrel{\simeq}{\twoheadleftarrow} \RusB_{\tilde{S}}(L, \F_2) $$
    or
    $$\F_2 \oplus \Sha^2_{\tilde{S} \cup \tilde{X}, 2}(L, \F_2) \stackrel{\simeq}{\hookrightarrow} \F_2 \oplus \Sha^2_{\tilde{S} \cup \tilde{X}}(L, \F_2) \stackrel{\simeq}{\hookrightarrow} \F_2 \oplus \RusB_{\tilde{S} \cup \tilde{X}}(L, \F_2) \stackrel{\simeq}{\twoheadleftarrow} \RusB_{\tilde{S}}(L, \F_2).$$
    As $A$ is an $\F_2[\Gamma]$-module, we have $\RusB_{\tilde{S}}(L, A) \simeq \RusB_{\tilde{S}}(L, \F_2) \otimes A$ (and similarly for $\RusB_{\tilde{S}\cup \tilde{X}}(L, A)$, $\Sha^2_{\tilde{S}\cup \tilde{X}}(L, A)$ and $\Sha^2_{\tilde{S}\cup \tilde{X}, p}(L, A)$). So {either}
    $$\Sha^2_{\tilde{S} \cup \tilde{X}, 2}(L, A) \stackrel{\simeq}{\hookrightarrow} \Sha^2_{\tilde{S} \cup \tilde{X}}(L, A) \stackrel{\simeq}{\hookrightarrow} \RusB_{\tilde{S}\cup \tilde{X}}(L, A) \stackrel{\simeq}{\twoheadleftarrow}\RusB_{\tilde{S}}(L, A) $$
    or
    $$(\F_2 \otimes A) \oplus \Sha^2_{\tilde{S} \cup \tilde{X}, 2}(L, A) \stackrel{\simeq}{\hookrightarrow} A \oplus \Sha^2_{\tilde{S} \cup \tilde{X}}(L, A) \stackrel{\simeq}{\hookrightarrow} A \oplus \RusB_{\tilde{S} \cup \tilde{X}}(L, A) \stackrel{\simeq}{\twoheadleftarrow} \RusB_{\tilde{S}}(L, A).$$
    Taking $\Gamma$-invariants and denoting by $S$ and $X$, respectively, the sets of primes of $K$ below $\tilde{S}$ and $\tilde{X}$, we obtain that either 
        $$\Sha^2(L_{\tilde{S}\cup \tilde{X}}(p)/K, A) \stackrel{\simeq}{\hookrightarrow} \Sha^2_{S \cup X}(K, A) \stackrel{\simeq}{\hookrightarrow} \RusB_{S\cup X}(K, A) \stackrel{\simeq}{\twoheadleftarrow} \RusB_S(K, A)$$
        or
        $$A^\Gamma \oplus \Sha^2(L_{\tilde{S} \cup \tilde{X}}(p)/K, A) \stackrel{\simeq}{\hookrightarrow} A^\Gamma \oplus \Sha^2_{S \cup X}(K, A) \stackrel{\simeq}{\hookrightarrow} 
        A^\Gamma \oplus \RusB_{S \cup X}(K, A) \stackrel{\simeq}{\twoheadleftarrow} \RusB_S(K, A),$$
    which proves part {\it (ii)}. 
    
    To prove the second part of  {\it (i)}, observe that $\dim A^\Gamma$ is equal to the multiplicity of the trivial representation in the decomposition of $A$.
    Thus, if $A$ has no {trivial} modules in its decomposition, then $\dim A^\Gamma = 0$, and the conclusion follows.    
\end{proof}

\section{Examples}\label{examples}

In \cite{MR4359923} several examples are given where the composite
$\Sha^2_{\emptyset, 2}(K,\F_2) \hookrightarrow \Sha^2_\emptyset(K, \F_2) \hookrightarrow \RusB_\emptyset(K,\F_2)$
is not surjective, but explicit sets  $S$ were given such that
\[
\Sha^2_{\emptyset, 2}(K,\F_2)\hookrightarrow \RusB_\emptyset(K,\F_2)
\simeq \RusB_S(K,\F_2) \stackrel{\simeq}{\hookleftarrow}\Sha^2_{S}(K,\F_2)
\stackrel{\simeq}{\hookleftarrow}\Sha^2_{S, 2}(K,\F_2).
\]
In this section we give two examples: one for odd $p$ and one for $p = 2$. We used MAGMA, \cite{MR1484478}, for our  computations. All code was run unconditionally, i.e. without assuming the GRH.
\begin{example}\label{example:One}
Let $K=\Q$, $L=\Q(\sqrt{5})$, $p=3$ and let $A$ be the group $\Z/3\Z$ with non-trivial action of
$\Gamma:=\Gal(L/K)$.
Note that $L/K$ is unramified outside $5$. 
Let $S=\{5\}$, $T=\{5,107\}$ and $V=\{5,107,197\}$ be sets of primes of $\Q$ and 
$\tilde{S}$, $\tilde{T}$ and $\tilde{V}$ are the corresponding sets of primes of $L$. 
Each rational prime of $V$ has a unique prime above it in $L$.

For $\tilde{X}\in \{ \tilde{S},\tilde{T},\tilde{V} \}$ we will study $L_{\tilde{X}}(3)$, the maximal pro-$3$ extension of $L$ unramified outside $\tilde{X}$. As $\tilde{X}$ is $\Gamma$-stable,  $L_{\tilde{X}}(3)/K$ is Galois.

A simple inflation-restriction argument, using that $(\#\Gamma,3)=1$, gives, for $X \in \{S,T,V\}$, that 
\[H^1(G_{K,X},A) \stackrel{\simeq}{\longrightarrow} H^1(G_{L,\tilde{X}},A)^\Gamma=\Hom_\Gamma(G_{L,\tilde{X}},A).
\]
The $3$-parts of the ray class groups are given below:
\begin{center}
{\it The $3$-parts of ray class groups for given tame conductors}
\end{center}

\begin{center}
\begin{tabular}{c|cccccc}
& $S$ & $T$ & $V$ & $\tilde{S}$ & $\tilde{T}$ & $\tilde{V}$\\ 
\hline
$K$ & $0$ & $0$ & $0$ &- & -&-\\
$L$ & -& -&-& $0$ &$\Z/3\Z$ & $\Z/27\Z$
\end{tabular}
\end{center}
The first row follows as none of the primes of $V$ are $1$ mod $3$, and the second  row comes from MAGMA computations. We want to determine the $\Gal(L/K)$-action on the ray class groups over $L$ for $\tilde{T}$ and $\tilde{V}$. That the first row is trivial implies the action of $\Gal(L/K)$ on the $3$-parts of the ray class groups over $L$ is multiplication by $-1$ so
\[
\Hom(G_{L,\tilde{X}},A)=\Hom_\Gamma(G_{L,\tilde{X}},A)=H^1(G_{L,\tilde{X}},A)^\Gamma=H^1(G_{K,X},A)
\]
and thus
\[
\dim H^1(G_{K,T},A) = \dim H^1(G_{L,\tilde{T}},A)=1
\mbox{ and }
\dim H^1(G_{K,V},A) = \dim H^1(G_{L,\tilde{V}},A)=1.
\]
As $G_{K,V} \twoheadrightarrow G_{K,T}$ (let $J$ denote the kernel) and the inflation map 
$H^1(G_{K,T},A) \longrightarrow H^1(G_{K,V},A)$ is an isomorphism, the latter half of the $5$-term inflation-restriction sequence gives the top row of:
\begin{center}
\begin{tikzcd} 
0  \arrow[r] & H^1(J,A)^{G_{K,T}}  \arrow[r] 
&H^2(G_{K,T},A) \arrow[r] \arrow[d]& H^2(G_{K,V},A)\arrow[d] \\
& & \oplus_{\q \in T} H^2(G_{K_\q},A) \arrow[r, hook] &   \oplus_{\q \in V} H^2(G_{K_{\q}},A)
\end{tikzcd}
\end{center}
From the second row of the table we have the $3$-part of the ray class group of conductor $\tilde{V}$ properly surjects onto that of conductor $\tilde{T}$, giving a non-trivial element 
$\alpha \in H^1(J,A)^{G_{K,T}}$. Chasing $\alpha$ to 
$\oplus_{{\q} \in V} H^2(G_{K_{\q}},A)$ its image is zero,  so its image 
in $\oplus_{\q \in T} H^2(G_{K_{\q}},A)$ is zero as well. Thus the image of $\alpha$ in  $H^2(G_{K,T},A)$ actually lies in $\Sha^2_T(K,A)$ so this latter group is non-zero.

It remains to show 
\begin{enumerate}[label=(\roman*)]
    \item  $\dim \RusB_S(K,A) =\dim \RusB_T(K,A) =1$,
    \item   $\Sha^2(L_{\tilde{S}}(3)/K,A)=0$.
\end{enumerate}
Once these are established, we will have shown that 
$\dim \Sha^2(L_{\tilde{X}}(3)/K,A)$ {\it increases} as we change $X$ from $S$ to $T$ and reaches its maximum of $\dim \RusB_S(K,A) =1$.

\begin{enumerate}[label=(\roman*)]
    \item By Lemma~\ref{lemma:equiv}{\it (iv)}, $ \RusB_S(K,A) \simeq \RusB_S(L,A)^\Gamma$ and  
$ \RusB_T(K,A) \simeq  \RusB_T(L,A)^\Gamma$.
Over $L$, the $\F_3[G_{L,\tilde{T}}]$-module $A$ has trivial action and is just 
$\F_3$. Using 
{the relation between $\dim H^1(G_{L, S}, \F_p)$ and $\dim \RusB_S(L, \F_p)$ given in (\ref{eq:ActualH1formula}) in the Introduction} and the table 
{above}, we have
$ \dim \RusB_S(L,A) =  \dim \RusB_T(L,A)=1$. Let $U_L$ be the unit group of $L$ and $\Cl_L[p]$ be the $p$-torsion in the class group of $L$.
{From ~(\ref{eq:Vemptyset}), we see
\[
0 \to (\Cl_L[3])^\vee \to \RusB_\emptyset(L,\F_3) \to (U_L/U_L^3)^\vee \to 0.
\]}
Using the fact that the class group of $L$ is trivial, its unit group has rank $1$, $\Gamma$ acts on $U_L/U^3_L$ by $-1$ and that $\Gamma$ acts on $A$ by $-1$ as well, {taking $\Gamma$-invariants gives} 
$ \dim \RusB_S({K},A) =  
\dim \RusB_T({K},A)=1$.
\item From the table we see there are no $3$-extensions of $L$ unramified outside $S$ so $L_{\tilde {S}}=L$. It follows that $\Sha^2(L_{\tilde{S}}(3)/K, A) \simeq \Sha^2_{\tilde{S}, 3}(L, A)^\Gamma = 0$. 
\end{enumerate}
\end{example}
\begin{example}\label{example:Two}

We adapt Example 1 of \cite{MR4359923}.
Let $K=\Q$ and $L=\Q(\zeta_7+\zeta^{-1}_7) = \Q(\cos \frac{2\pi}{7})$,
noting that $L$ is the splitting field of the polynomial
$x^3+x^2-2x-1$. This is the totally real subfield of $\Q(\zeta_7)$.
Clearly $\Gamma:=\Gal(L/\Q) \simeq \Z/3\Z$ and one can check
 that $L$ has trivial class group. 
We set $p=2$ and $A = \F_2\oplus \F_2$ with a non-trivial action of $\Gamma$. Note that 
$\{\pm 1\} \cap -4{\Q}^4=\emptyset$ so the situation is {\it not} exceptional, but our computations make no use of this.
\begin{lemma}\label{lemma:stable}
    $\dim_{\F_2} (A\otimes_{\F_2} A)^\Gamma=2$. 
\end{lemma}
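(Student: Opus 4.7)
The plan is to identify the $\F_2[\Gamma]$-module $A$ explicitly and then reduce the computation to a Schur-type statement.

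First, I would observe that $\mathrm{Aut}_{\F_2}(A) = GL_2(\F_2) \simeq S_3$ contains a unique subgroup of order $3$. Since $|\Gamma| = 3$ is coprime to $p = 2$ and $\Gamma$ acts nontrivially on $A$, the action is, up to $\F_2[\Gamma]$-isomorphism, uniquely determined. In particular, $A$ is the unique $2$-dimensional irreducible $\F_2[\Gamma]$-module, and can be realized concretely as $\F_4$ with $\Gamma$ acting by multiplication by a generator $\omega \in \F_4^\times$.

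Next, since $A^\vee$ is also a $2$-dimensional $\F_2[\Gamma]$-module on which $\Gamma$ acts nontrivially (the dual of a faithful action of a group of odd order is faithful), the uniqueness above forces $A^\vee \simeq A$ as $\F_2[\Gamma]$-modules. The canonical isomorphism $A^\vee \otimes_{\F_2} A \simeq \mathrm{End}_{\F_2}(A)$ identifies $\Gamma$-invariants with $\F_2[\Gamma]$-linear endomorphisms, so
\[
(A \otimes_{\F_2} A)^\Gamma \simeq (A^\vee \otimes_{\F_2} A)^\Gamma \simeq \mathrm{End}_{\F_2[\Gamma]}(A).
\]
By Schur's lemma, $\mathrm{End}_{\F_2[\Gamma]}(A)$ is a finite-dimensional division algebra over $\F_2$; concretely, it is the centralizer of $\omega$ in $\mathrm{End}_{\F_2}(A) = M_2(\F_2)$, which equals $\F_2[\omega] \simeq \F_4$. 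Hence $\dim_{\F_2}(A \otimes_{\F_2} A)^\Gamma = 2$, as claimed.

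The only real content is the classification of nontrivial $2$-dimensional $\F_2$-representations of $\Z/3\Z$, which is immediate from the order-$3$ subgroup structure of $GL_2(\F_2) \simeq S_3$; everything else is formal. As a sanity check one can redo the calculation after extending scalars: $A \otimes_{\F_2} \F_4 \simeq \chi \oplus \chi^{-1}$ for a nontrivial character $\chi$ of $\Gamma$, whence $(A \otimes A) \otimes_{\F_2} \F_4 \simeq \mathbf{1} \oplus \mathbf{1} \oplus \chi^2 \oplus \chi^{-2}$ and the trivial isotypic component has $\F_4$-dimension $2$, recovering the same answer.
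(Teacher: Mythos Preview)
Your proof is correct. Your main argument differs from the paper's: you exploit the self-duality $A^\vee \simeq A$ (forced by the uniqueness of the nontrivial $2$-dimensional $\F_2[\Gamma]$-module) to identify $(A\otimes A)^\Gamma$ with $\mathrm{End}_{\F_2[\Gamma]}(A)$, and then invoke Schur's lemma to recognize this endomorphism ring as $\F_4$. The paper instead goes straight to your ``sanity check'': it extends scalars to $\F_4$, decomposes $A\otimes_{\F_2}\F_4$ as $\F_4(\psi)\oplus\F_4(\psi^{-1})$, and reads off the trivial isotypic piece of the tensor square. Your Schur-lemma route is slightly more conceptual and avoids scalar extension, while the paper's approach is more direct and computational; since you include the scalar-extension computation anyway, the two proofs end up covering the same ground.
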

\begin{proof}
    Extend scalars to $\F_4$ which contains all the eigenvalues of $\Gamma$ acting on $A$. 
  Let $\psi:\Gamma \to \F^{\times}_4$ be a non-trivial character.
  One checks
   $A\otimes_{\F_2} \F_4 \simeq \F_4(\psi) \oplus \F_4(\psi^{-1})$ so 
\[
 \dim_{\F_2} (A\otimes_{\F_2} A)^\Gamma=
\dim_{\F_4}\left[ \left(\F_4(\psi) \oplus \F_4(\psi^{-1})) \otimes_{\F_4} 
  (\F_4(\psi) \oplus \F_4(\psi^{-1})\right)\right]^\Gamma =2.
  \]
\end{proof}
Consider the following sets of primes of $K$  and $L$ below:
In  the sets of primes of $L$, the subscript ``$1$'' indicates the first prime of $L$ above the rational prime as chosen by MAGMA. 
$$ \mbox{Primes of $K$}: S=\{7\}, \mbox{ } T=\{7,181,293\}, \mbox{ and } V=\{7,181,293,307,349\}.$$
$$ \mbox{Primes of $L$}:S'=\{7_1\}, \mbox{ } T'=\{7_1,181_1,293_1\}, \mbox{ } V'=\{7_1,181_1,293_1,307_1,349_1\}.$$

The first row of the table below
gives the $2$-primary part of ray class groups of L with given tame conductor $X' \in \{ \emptyset, S', T', V' \}$, as computed by MAGMA. 
The second row follows from the first row and (\ref{eq:ActualH1formula}) of the Introduction.

\begin{center}
\begin{tabular}{c|cccc}
& $\emptyset$ & $S'$ & $T'$ & $V'$  \\ 
\hline
$RCG_{L,X'}[2^\infty]$ & $0$ & $0$ & $\Z/2\Z \times \Z/2\Z$ & $\Z/4\Z \times \Z/4\Z$\\
$\dim_{\F_2} \RusB_{X'}(L,\F_2)$ & $3$ & $2$ & $2$ & $0$\\
$\dim_{\F_2} \Sha^2_{X', 2}(L,\F_2)$ & $0$ & $0$ & $2$ & $0$\\
\end{tabular}
\end{center}
We now explain the third row: From the first row we see $L_\emptyset(2)=L_{S'}(2)=L$ so $G_{L,\emptyset}(2)=G_{L,S'}(2)$ is trivial which gives the first two entries of the third row. The fourth entry follows from the injection $\Sha^2_{V', 2}(L,\F_2) \hookrightarrow \RusB_{V'}(L,\F_2)=0$.
For the third entry,
using inflation-restriction for the map $1 \to N \to G_{L,V'}(2) \twoheadrightarrow G_{L,T'}(2) \to 1$, the last two entries of the first row of the table and the method of Example~\ref{example:One}, we have
$$H^1(N,\F_2)^{G_{L,T'}(2)} \hookrightarrow \Sha^2_{T',2}(L,\F_2).$$
As 
\begin{align*}
    2 &= \dim_{\F_2} \left( \ker \left(RCG_{L,V'}[2^\infty] \twoheadrightarrow RCG_{L,T'}[2^\infty]\right) \otimes_{\Z} \F_2 \right) \\
    & \leq \dim_{\F_2} H^1(N,\F_2)^{G_{L,T'}(2)} \leq \dim_{\F_2} \Sha^2_{T',2}(L,\F_2) \leq  \dim_{\F_2} \RusB_{T'} (L,\F_2)=2,
\end{align*}
we are done.

Let $\tilde{X}$ be the $\Gal(L/K)$-orbit of $X' \in \{S', T', V' \}$. We now justify the data in the table below:
\begin{center}
\begin{tabular}{c|cccc}
& $\emptyset$ & $\tilde{S}$ & $\tilde{T}$ & $\tilde{V}$  \\ 
\hline
$\dim_{\F_2} \RusB_{\tilde{X}}(L,A)^\Gamma$ & $2$ & $2$ & $2$ & $0$\\
$\dim_{\F_2} \Sha^2_{\tilde{X}, 2}(L,A)^\Gamma $ & $0$ & $0$ & $2$ & $0$\\
\end{tabular}
\end{center}
Since the class number of $L$ is $1$, we see 
$\RusB_\emptyset(L,\F_2) $ is the dual of $U_L/U_L^2$ where $U_L$ is the unit group of $L$, which has rank $2$ as $L$ is totally real. 
Recalling $\{\pm 1\} \subset U_L$,
one easily sees 
$U_L/U^2_L \simeq \F_2 \oplus A$ 
as $\F_2[\Gamma]$-modules   so
$\RusB_\emptyset(L,\F_2) \simeq \F_2 \oplus A$ as well.
Then 
$$\RusB_\emptyset(L,A) \simeq  \RusB_\emptyset(L,\F_2) \otimes_{\F_2} A \simeq A \oplus (A\otimes_{\F_2}A).$$
Taking $\Gamma$-invariants and using Lemma~\ref{lemma:stable} gives the first entry of the first row.
As $7$ is totally ramified in $L/K$, we see $S'=\tilde{S}$ so
$\dim_{\F_2} \RusB_{\tilde{S}}(L,\F_2)=2$. This group is a $\Gamma$-stable quotient of $\RusB_\emptyset(L,\F_2)\simeq\F_2 \oplus A$ and must therefore be $A$, so
$$\RusB_{\tilde{S}}(L,A) = \RusB_{\tilde{S}}(L,\F_2) \otimes_{\F_2} A \simeq A\otimes_{\F_2} A,$$
and Lemma~\ref{lemma:stable} gives the second entry of the first row. We know from the first table that
$\dim_{\F_2} \RusB_{T'}(L,\F_2)=2$ and  Lemma~\ref{RusBpreserved} gives
$\dim_{\F_2} \RusB_{\tilde{T}}(L,\F_2)=2$. As a $\Gamma$-module, this is again $A$, so tensoring with $A$ and taking $\Gamma$-invariants gives the third entry of the first row. The fourth entry  follows as 
$0=\RusB_{V'}(L,\F_2)=\RusB_{\tilde{V}}(L,\F_2)$ from the first table.

The first two entries of the second row follow from the fact that 
$L_\emptyset(2)=L_{\tilde{S}}(2)=L$ so $\Gal( L_\emptyset(2)/K)=
\Gal(L_{\tilde{S}}(2)/K)=\Gal(L/K)=\Gamma$. The fourth entry follows as 
$\Sha^2_{\tilde{V}, 2}(L,A) \hookrightarrow \RusB_{\tilde{V}}(L,A)=0$.
The third entry comes from the 
fact that the isomorphism 
$ \Sha^2_{T', 2}(L,\F_2) \stackrel{\simeq}{\hookrightarrow} \RusB_{T'}(L,\F_2)$ of $2$-dimensional vector spaces induces, using
Lemma~\ref{RusBpreserved},
the 
$\Gamma$-equivariant isomorphism 
$ \Sha^2_{\tilde{T}, 2}(L,\F_2) \stackrel{\simeq}{\hookrightarrow} \RusB_{\tilde{T}}(L,\F_2)$ of $2$-dimensional vector spaces. Both of these $\F_2[\Gamma]$-modules are $A$, so again, tensoring with $A$ and taking $\Gamma$-invariants completes this table.

From the second and third columns, we  have increased
$\Sha^2_{\tilde{X}, 2}(L,A)^\Gamma \simeq \Sha^2(L_{\tilde{X}}(2)/K,A)$ as we increase $X$ in size from $S$ to $T$ from the trivial group to 
$\RusB_{\tilde{T}}(L,A)^\Gamma \simeq \RusB_{\tilde{S}}(L,A)^\Gamma$, its maximal possible size.

\end{example}

\bibliographystyle{amsplain}
\bibliography{Shafarevich}

\end{document}